\newcommand{\R}{{\mathbf R}}
\newtheorem{theorem}{Theorem}[section] 
\newtheorem{lemma}[theorem]{Lemma} 
\newtheorem{proposition}[theorem]{Proposition}
\newtheorem{corollary}[theorem]{Corollary}
\theoremstyle{definition} 
\theoremstyle{remark}
\newtheorem*{remark*}{Remark}
\begin{document}


\title{Finite time blow up in the hyperbolic Boussinesq system}
\author{Alexander Kiselev}
\address{\hskip-\parindent
Alexander Kiselev\\
Department of Mathematics\\
Rice University\\
Houston, TX 77005, USA}
\email{kiselev@rice.edu}
\author{Changhui Tan}
\address{\hskip-\parindent
Changhui Tan\\
Department of Mathematics\\
Rice University\\
Houston, TX 77005, USA}
\email{ctan@rice.edu}

\date{\today}

\begin{abstract}
In recent work of Luo and Hou \cite{HouLuo1}, a new scenario for finite time blow up in solutions of 3D Euler equation has been proposed.
The scenario involves a ring of hyperbolic points of the flow located at the boundary of a cylinder. In this paper, we propose a two dimensional
model that we call ``hyperbolic Boussinesq system". This model is designed to provide insight into the hyperbolic point blow up scenario.
The model features an incompressible velocity vector field, a simplified Biot-Savart law, and a simplified term modeling buoyancy.
We prove that finite time blow up happens for a natural class of initial data.
\end{abstract}

\maketitle

\section{Introduction}

The Euler equation of fluid mechanics has been derived in 1755 and appears to be the second PDE ever written. The equation is nonlinear and
nonlocal, which makes analysis challenging. In particular, the question whether solutions corresponding to smooth initial data remain globally regular
remains open in three dimensions. There have been many attempts to resolve this problem either in the regularity direction, or by constructing finite time
blow up examples. We refer to \cite{MB,MP} for history and more details.

Recently, a new scenario for finite time blow up in 3D Euler equation has been proposed by Luo and Hou \cite{HouLuo1} based on extensive numerical simulations.
The scenario is axi-symmetric, and is set in a vertical cylinder $r=1$ with no penetration boundary conditions at the boundary and periodic boundary conditions in $z.$
Angular components of both vorticity, $\omega^\theta,$ and velocity $u^\theta$ obey odd symmetry with respect to $z=0$ plane. The resulting solution forms
rolls which make all points satisfying $r=1$ and $z=0$ hyperbolic points of the flow. It is at these points that very fast growth of vorticity $\omega^\theta$ is observed.

It is well known that the 2D Boussinesq system is essentially identical to the 3D axi-symmetric Euler equation away from the axis $r=0$ (see, e.g. \cite{MB}).
Since in the Hou-Luo scenario, the growth happens at the boundary and away from the axis, we will operate with the 2D Boussinesq system directly.
Recall that the 2D Boussinesq system in vorticity form is given by
\begin{eqnarray}\label{vorteq}
\partial_t \omega +(u \cdot \nabla) \omega = \partial_{x_1} \rho, \,\,\, \omega(x,0)=\omega_0(x),\\
\partial_t \rho +(u \cdot \nabla) \rho = 0, \,\,\, \rho(x,0)=\rho_0(x), \label{deneq} \\
u = \nabla^\perp (-\Delta)^{-1} \omega. \label{bseq}
\end{eqnarray}
We will consider this system in the half-space $x_2 \geq 0,$ and in \eqref{bseq} take Laplacian satisfying Dirichlet boundary conditions on the boundary $x_2=0.$
Such choice corresponds to no penetration boundary condition for $u.$ The initial condition $\omega_0(x)$ is odd in $x_1$ and $\rho_0(x)$ is even in $x_1;$ this symmetry is conserved by evolution.
This set up corresponds to Hou-Luo scenario turned by $\pi/2:$ $x_1$ corresponds to $z$ and $x_2$ to $r,$
and for the right initial data we expect very fast growth of $\omega$ at the origin. We note that, naturally, the problem of global regularity vs finite time blow up for the system
\eqref{vorteq}, \eqref{deneq}, \eqref{bseq} is also open and well known. It is appears, for example, as one of the ``eleven great problems of mathematical hydrodynamics" in \cite{Yud}.

There have been several works which aimed to understand Hou-Luo scenario rigorously. Kiselev and Sverak \cite{KS} have looked at a geometry and initial data similar to Hou-Luo
scenario but in the 2D Euler case, which is equivalent to setting $\rho \equiv 0$ in the 2D Boussinesq system. They constructed examples of solutions in unit disk $D$ for which $\| \nabla \omega\|_{L^\infty}$
exhibits double exponential growth for all times. This is known to be the fastest possible growth rate, as double exponential in time upper bounds on $\| \nabla \omega\|_{L^\infty}$
go back to work of Wolibner \cite{Wol}. A key part of the construction in \cite{KS} is the following representation formula for the fluid velocity near origin, $x_{1,2} \geq 0:$
\begin{equation}\label{ML}
u_{i}(x,t) = (-1)^i \frac{4}{\pi} x_i \int_{Q(x)} \frac{y_1 y_2}{|y|^4} \omega(y,t)\, dy + B_i(x)x_i, \,\,\,i =1,2, \,\,\, |B_i(x)| \leq C\|\omega\|_{L^\infty},
\end{equation}
where $Q(x)$ is the ``look back" set $Q(x) = \{ y \in D: y_1 \geq x_1, \,\,\, y_2 \geq x_2 \}.$ There are also certain small exceptional sectors where \eqref{ML} is not valid, buy we omit these details.
The first term on the right hand side in \eqref{ML} is, in certain regimes, the main term.
It possesses useful features: it is sign definite if $\omega$ has fixed sign, and there is a hidden comparison-like principle based on the increase of $Q$ as $x$ moves closer to the origin.
These properties play an important role in the proof of lower double exponential in time bound on the gradient of vorticity.

Other works focused on 1D models of the Hou-Luo scenario. Hou and Luo \cite{HouLuo1} proposed a model which has one-dimensional structure similar to \eqref{vorteq}, \eqref{deneq}, but with the effective
Biot-Savart law given by $u_x = H\omega,$ where $H$ is the Hilbert transform. This model can be viewed as 2D Boussinesq system restricted to the boundary $x_2=0,$ with the Biot-Savart law
obtained under assumption that vorticity is concentrated in a boundary layer and does not depend on $x_2$ in this boundary layer. A simpler 1D model with Biot-Savart law inspired by \eqref{ML}
has been considered in \cite{CKY}, where it was also proved that finite time blow up can happen for this model. In \cite{HouLiu}, more information on the structure of blow up solutions has been obtained.
Existence of finite time blow up in the original Hou-Luo model has been proved in \cite{CHKLSY}, and a more general argument applying to a broader class of models was presented in \cite{DKX}.
Some infinite energy solutions of 2D Boussinesq system with simple structure and growing derivatives, inspired by Hou-Luo scenario, have been presented in \cite{CCW}.

Passing from the nonlinear analysis of 2D Euler equation \cite{KS} or 1D models \cite{CKY,CHKLSY} to the 2D Boussinesq case presents many challenges. First, one needs to understand how
growth of vorticity happens in 2D geometry, and to develop a framework for controlling it. Secondly, as opposed to the 2D Euler case, the vorticity no longer has fixed sign (in $x_1 \geq 0$ region),
since the forcing term $\partial_{x_1}\rho$ will generate vorticity of the opposite sign. This may deplete flow towards the origin which increases $\partial_{x_1}\rho$ and drives vorticity growth.
Thirdly, analysis of Biot-Savart law that leads to \eqref{ML} fails if vorticity can grow: the terms that go into the Lipschitz error in \eqref{ML} can no longer be controlled the same way.
Each of these complications is significant.

Our goal in this work is to address the first issue, and to develop a fully two dimensional, incompressible model which exhibits finite time blow up. In this process, we will be able to
get some idea of the picture of blow up as well as introduce some relevant objects. The model will have simplified Biot-Savart law and also simplified forcing term.
Similarly to \eqref{vorteq}, \eqref{deneq}, \eqref{bseq} it can also be set on half space, but due to symmetry it suffices to consider the first quadrant $D := \{x \left| x_1 \geq 0, \,\,\,x_2 \geq 0 \right.\}.$
The model is given by
\begin{eqnarray}\label{vortmod}
\partial_t \omega + (u \cdot \nabla) \omega = \frac{\rho}{x_1}, \,\,\, \omega(x,0)=\omega_0(x), \\
\partial_t \rho + (u \cdot \nabla) \rho = 0,  \,\,\, \rho(x,0)=\rho_0(x), \label{denmod} \\
u = (-x_1 \Omega(\eta,t), x_2 \Omega(\eta,t)), \,\,\, \Omega(\eta) = \int_{y_1y_2 \geq \eta}\frac{1}{|y|^2}\omega(y,t)\,dy,\,\,\,\eta=x_1x_2. \label{bsmod}
\end{eqnarray}
Comparing this system with the 2D Boussinesq, note that we replaced $\partial_{x_1} \rho$ with $\frac{\rho}{x_1}.$ Given that we expect blow up to happen at the origin,
and that $\rho$ will initially be supported away from the origin, the second term is a natural model for the first one, and has been proposed in \cite{HR}.
The term $\frac{\rho}{x_1}$
is also simpler, as it is sign definite if $\rho$ has fixed sign. Next, the form of the Biot-Savart law
\begin{equation}\label{bsstruct} u = (-x_1 \Omega(x,t), x_2 \Omega(x,t)) \end{equation}
is patterned after the expression
\eqref{ML}. If one also requires $u$ to be incompressible, then a simple computation shows that $\Omega$ can only depend on $x_1x_2.$ Since every trajectory corresponding to
\eqref{bsmod} is a hyperbola, we see that $\Omega(x_1x_2,t)$ is constant along each trajectory, at any given time. The form of the integral in \eqref{bsmod} defining $\Omega$ is the simplest
possible with the same dimensional structure as the real Biot-Savart law. For a more complete resemblance with \eqref{ML}, we could have taken the kernel in the integral defining $\Omega$
 in \eqref{bsmod} to be $\frac{y_1y_2}{|y|^4},$ but we will indicate below that this change makes no difference in terms of the key properties of the model.
 We will call the system \eqref{vortmod}, \eqref{denmod}, \eqref{bsmod} ``hyperbolic Boussinesq system",
since this model is geared towards the hyperbolic point growth scenario, and the trajectories of the system are precise hyperbolas.

Our main goal in this paper is to prove local well-posedness as well as finite time blow up for hyperbolic Boussinesq system. We will say that $f \in K_n$ if $f$ has compact support
in $D,$ $f \in C^n ( D)$, and $\delta_f = {\rm min}_{x \in {\rm supp}(f)} x_1 >0.$ We set
\[ \|f\|_{K_n} := \|f\|_{C^n(D)}  + |{\rm supp}(f)| + \delta_f^{-1}. \]

\begin{theorem}\label{local}
Suppose $\omega_0,\rho_0 \in K_n,$ $n \geq 1.$ Then there exists $T = T(\|\omega_0\|_{K_n},\|\rho_0\|_{K_n})$ such that there exists a unique solution $\omega(x,t),\,\,\,\rho(x,t)$ of the hyperbolic Boussinesq system
\eqref{vortmod}, \eqref{denmod}, \eqref{bsmod} which belongs to $C([0,T], K_n)$.
\end{theorem}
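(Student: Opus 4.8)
The plan is to set up a standard Picard iteration in the Banach space $C([0,T],K_n)$, with the main work being to verify that the map sending a velocity field to the transported $(\omega,\rho)$ is a contraction on a small time interval. First I would reformulate the system in Lagrangian coordinates: let $\Phi_t(x)$ denote the flow map of $u$, so that $\rho(\Phi_t(x),t)=\rho_0(x)$ exactly (since $\rho$ is merely transported), and $\omega(\Phi_t(x),t)=\omega_0(x)+\int_0^t \frac{\rho(\Phi_s(x),s)}{(\Phi_s(x))_1}\,ds = \omega_0(x)+\int_0^t \frac{\rho_0(x)}{(\Phi_s(x))_1}\,ds$. Because the velocity field \eqref{bsmod} has the explicit form $u=(-x_1\Omega(\eta,t),x_2\Omega(\eta,t))$ with $\eta=x_1x_2$ conserved along trajectories, the flow map is extremely well-behaved: each trajectory is a hyperbola $x_1x_2=\text{const}$, and along it $(\Phi_t(x))_1 = x_1\exp(-\int_0^t\Omega(\eta,s)\,ds)$, $(\Phi_t(x))_2=x_2\exp(\int_0^t\Omega(\eta,s)\,ds)$. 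In particular, if $\omega(\cdot,t)\in K_n$ with bounds $M$ on $\|\omega\|_{K_n}$, then $|\Omega(\eta,t)|\le \int_{\supp\omega}|y|^{-2}|\omega|\,dy$ is bounded in terms of $M$ (using that $|\supp\omega|$ and $\delta_\omega^{-1}$ are controlled, so $|y|^{-2}$ is bounded on the support), and one similarly bounds derivatives of $\Omega$ in $\eta$.

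The key a priori estimates are then: (i) the set $Q=\int_0^t\Omega\,ds$ satisfies $|Q|\le tCM$, so $\delta_{\omega(t)}\ge \delta_{\omega_0}e^{-tCM}$ stays positive and $|\supp\omega(t)|$ stays comparable to $|\supp\omega_0|$ for $t$ small — this is what keeps us inside $K_n$ and, crucially, keeps $\rho_0(x)/(\Phi_s(x))_1$ bounded since the $x_1$-coordinate along every relevant trajectory stays bounded below; (ii) propagation of the $C^n$ bound, obtained by differentiating the Lagrangian formulas, where one needs $\|\nabla\Phi_t\|_{C^{n-1}}$, which in turn needs $\Omega(\eta,t)$ to be $C^n$ in $x$ — here one uses the chain rule with $\eta=x_1x_2$ and the smoothness of $\Omega$ in its scalar argument, estimated from the integral defining it. Putting these together gives a closed differential inequality $\frac{d}{dt}M(t)\le P(M(t))$ for a polynomial $P$, hence a time $T$, depending only on the initial norms, on which the a priori bound $M(t)\le 2M(0)$ persists.

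For existence and uniqueness I would iterate: given $\omega^{(k)},\rho^{(k)}$, compute $\Omega^{(k)}$, hence $u^{(k)}$, hence the flow $\Phi^{(k)}$, and define $\rho^{(k+1)},\omega^{(k+1)}$ by the Lagrangian formulas above. The a priori bounds show the iterates stay in a fixed ball of $C([0,T],K_n)$. For contraction, I would estimate $\|\omega^{(k+1)}-\omega^{(k)}\|_{C([0,T],K_{n-1})}$ and similarly for $\rho$ in terms of $\|\omega^{(k)}-\omega^{(k-1)}\|$, losing one derivative as usual; the differences $\Omega^{(k)}-\Omega^{(k-1)}$, $\Phi^{(k)}-\Phi^{(k-1)}$ are controlled linearly, with a factor $T$ up front, so choosing $T$ smaller if necessary makes the map a contraction in the lower-regularity norm. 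Standard arguments (the iterates converge in $K_{n-1}$, are bounded in $K_n$, hence converge in $K_{n'}$ for $n'<n$ and the limit lies in $K_n$ with weak-$*$ continuity upgraded to strong continuity via the equation) then yield a solution in $C([0,T],K_n)$; uniqueness follows from the same contraction estimate applied to two solutions.

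The main obstacle I anticipate is bookkeeping the nonlocal, $x$-dependent Biot–Savart law: because $\Omega$ depends on $x$ only through $\eta=x_1x_2$ but the coefficient $1/x_1$ in \eqref{vortmod} and the coordinate $(\Phi_s(x))_1$ are genuinely two-dimensional, one must be careful that the lower bound $\delta_{\omega(t)}>0$ and $\delta_{\rho(t)}>0$ are preserved and enter all the estimates — this is the mechanism that prevents the forcing $\rho/x_1$ from blowing up instantly and is what makes $K_n$ (rather than plain $C^n$) the right space. Controlling the $C^n$ norm of $\Omega$ via the $\eta$-integral, and checking that differentiating through the moving domain $\{y_1y_2\ge\eta\}$ produces only boundary terms that are likewise controlled by the $K_n$ data, is the one computation that needs genuine care.
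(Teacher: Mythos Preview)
Your proposal is correct and is essentially the paper's own approach: both exploit the explicit hyperbolic flow map (the paper via the change of variables $z_1=\log(x_1x_2)$, $z_2=\log(x_2/x_1)$, which is exactly your Lagrangian formula $(\Phi_t(x))_1=x_1 e^{-\int_0^t\Omega}$, $(\Phi_t(x))_2=x_2 e^{\int_0^t\Omega}$ in disguise), reduce everything to an iteration on $\omega$ and the scalar $\Phi(z_1,t)=2\int_0^t\Omega\,ds$, and close a differential inequality for the relevant norms on a short time interval. The only cosmetic differences are that the paper shows convergence of the iterates via a direct factorial bound $G_n\le C^n(1+|z_1|)^n t^n/n!$ on successive differences rather than a contraction in a lower norm, and it isolates the step ``bounded $\Phi$ implies $K_n$ regularity'' (including the explicit computation of $\partial_\eta\Omega$ as a boundary integral that you flag at the end) as a separate lemma, which it then reuses for the Beale--Kato--Majda analogue.
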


\begin{theorem}\label{bu}
There exist smooth initial data $\omega_0, \rho_0$ which are in $K_n$ for all $n$ such that the corresponding solution $\omega(x,t), \,\,\,\rho(x,t)$ blows up in finite time.
Specifically, finite time blow up holds in the sense that $\Phi(0,t) \equiv 2\int_0^t \Omega(0,s)\,ds$ as well as
$\int_0^t \|\omega(\cdot, s)\|_{L^\infty}\,ds$ tend to infinity as $t$ approaches the blow up time $T_b.$
\end{theorem}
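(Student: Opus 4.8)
The plan is to exploit the explicit hyperbolic structure of the flow together with the sign-definiteness of $\rho$ and the monotonicity hidden in $\Omega$. First I would record the Lagrangian picture: since $u = (-x_1\Omega(\eta,t), x_2\Omega(\eta,t))$ with $\eta = x_1 x_2$ conserved along trajectories, the flow map $\Phi_t$ in the $\eta$ variable is trivial ($\dot\eta = 0$), and along the hyperbola $\{x_1 x_2 = \eta_0\}$ the dynamics is a pure dilation: writing $x_1(t) = x_1(0)\exp(-\int_0^t \Omega(\eta_0,s)\,ds)$, $x_2(t) = x_2(0)\exp(\int_0^t \Omega(\eta_0,s)\,ds)$. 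In particular $\Phi(\eta_0,t) := 2\int_0^t \Omega(\eta_0,s)\,ds$ is exactly (twice) the logarithmic stretching factor, and the $L^\infty$ norms of $\omega$ and $\rho$ along a fixed trajectory evolve by ODEs driven by $\rho/x_1$ and $0$ respectively. Since $\rho$ is transported, $\|\rho(\cdot,t)\|_{L^\infty} = \|\rho_0\|_{L^\infty}$ for all $t$, and $\rho$ keeps its sign, which I will take to be positive on the relevant part of $D$.

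The heart of the argument is a lower bound showing $\Omega(0,t)$ grows fast enough to reach infinity in finite time. I would choose $\omega_0, \rho_0$ supported in a sector bounded away from both axes, with $\omega_0 \ge 0$ and $\rho_0 > 0$ there; by \eqref{vortmod} and positivity of $\rho/x_1$, $\omega$ stays nonnegative on the image of this sector, so $\Omega(\eta,t) = \int_{y_1 y_2 \ge \eta} |y|^{-2}\omega(y,t)\,dy \ge 0$, and moreover $\Omega(\eta,t)$ is nonincreasing in $\eta$ — this is the comparison-like principle analogous to the role of the ``look back'' set $Q(x)$ in \eqref{ML}, and it guarantees that $\Omega(0,t) = \sup_\eta \Omega(\eta,t)$ is the strongest stretching, felt near the origin. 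Next I would estimate $\partial_t \Omega(0,t)$ from below: differentiating under the integral, $\partial_t\Omega(0,t) = \int |y|^{-2}\bigl(-u\cdot\nabla\omega + \rho/y_1\bigr)dy$; integrating the transport term by parts using incompressibility of $u$ turns it into $\int \omega\, u\cdot\nabla(|y|^{-2})\,dy$, and on our sector $u\cdot\nabla(|y|^{-2})$ has a favorable sign (the flow pushes mass toward the origin where $|y|^{-2}$ is larger), while the forcing contributes $\int |y|^{-2} y_1^{-1}\rho\,dy \ge c\|\rho_0\|_{L^\infty}\cdot(\text{measure of transported support})\cdot(\text{lower bound on } |y|^{-2}y_1^{-1})$. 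The transported support shrinks in the $x_1$ direction like $e^{-\Phi}$ and stretches in $x_2$ like $e^{\Phi}$, so $|y|^{-2} y_1^{-1} \gtrsim e^{2\Phi}$ on it while its measure is preserved; hence $\partial_t \Phi(0,t) = 2\partial_t\Omega(0,t) \gtrsim c_0 e^{2\Phi(0,t)} + (\text{nonneg. vorticity term})$. This Riccati-type differential inequality $\Phi' \ge c_0 e^{2\Phi}$ forces $\Phi(0,t) \to \infty$ in finite time $T_b$, and since $\|\omega(\cdot,s)\|_{L^\infty} \ge \Omega(0,s)/(\text{area})$ up to a bounded factor, or more directly since $\omega$ is stretched along trajectories by the forcing, $\int_0^t \|\omega(\cdot,s)\|_{L^\infty}\,ds \to \infty$ as well; one checks that local well-posedness (Theorem \ref{local}) plus a continuation criterion in terms of $\int_0^t\|\omega\|_{L^\infty}$ identifies $T_b$ with the actual blow up time.

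The main obstacle I expect is controlling the vorticity transport term $\int \omega\, u\cdot\nabla(|y|^{-2})\,dy$ in the estimate for $\partial_t\Omega(0,t)$: it is not immediately sign-definite globally because $u\cdot\nabla(|y|^{-2}) = -2|y|^{-4}(y\cdot u) = -2|y|^{-4}\Omega(\eta)(x_2^2 - x_1^2)$, which changes sign across the diagonal $x_1 = x_2$. Handling this requires a careful choice of initial support — e.g. keeping $\supp\rho_0$ near the $x_2$-axis side where $x_2 > x_1$, so that as $\omega$ is generated by the forcing it lands in the region where this term is also favorable, or else decomposing the integral and absorbing the bad part into the (large, positive) good part. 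A secondary technical point is keeping $\delta_f = \min_{\supp} x_1$ from collapsing too fast: although $x_1 \to 0$ along trajectories is exactly what drives blow up, one must verify the solution stays in $K_n$ on $[0,T_b)$ so that Theorem \ref{local} applies on each subinterval — this follows because $\delta_{\omega(t)} \ge \delta_{\omega_0} e^{-\Phi(0,t)}$ stays positive for $t < T_b$, and the $C^n$ norms, while growing, remain finite before $T_b$.
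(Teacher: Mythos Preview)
Your approach has a genuine gap in the core estimate. On the transported support (which you take bounded away from both axes), after time $t$ you have $y_1 \sim e^{-\Phi/2}$ and $y_2 \sim e^{\Phi/2}$, hence $|y|^2 \sim y_2^2 \sim e^{\Phi}$ and therefore $|y|^{-2}y_1^{-1}\sim e^{-\Phi/2}$, not $e^{2\Phi}$. The forcing contribution to $\partial_t\Omega(0,t)$ thus \emph{decays}, and no Riccati inequality follows. The underlying geometric error is the claim that ``the flow pushes mass toward the origin'': the hyperbolic flow $u=(-x_1\Omega,x_2\Omega)$ pushes mass toward the $x_2$-axis, so $|y|$ \emph{increases} once $y_2>y_1$. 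This is exactly why your transport term $\int \omega\, u\cdot\nabla(|y|^{-2})\,dy = -2\int \omega\,\Omega(\eta)\,|y|^{-4}(y_2^2-y_1^2)\,dy$ is eventually \emph{negative}, not favorable; placing the data on the $x_2>x_1$ side, as you suggest, makes this worse rather than better.

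The paper's argument is organized quite differently and sidesteps both issues. It requires $\rho_0$ \emph{not} to vanish on the $x_1$-axis (the opposite of your choice), works in the logarithmic coordinates $z_1=\log(x_1x_2)$, $z_2=\log(x_2/x_1)$, and tracks a moving front $F_2(t)=\max\{z_1: z_1+\Phi(z_1,t)=0\}$ --- the level where the transported support of $\rho$ is currently crossing the diagonal $x_1=x_2$. Near this front the weight $(\cosh z_2)^{-1}$ in the $z$-coordinate expression for $\Omega$ is of order one, and the explicit formula \eqref{omsol} shows the vorticity there has already accumulated a factor $\int_0^t e^{\Phi/2}\,ds$. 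Differentiating the identity $H(F_2(t),t)=0$ together with the bound $0\le\partial_{z_1}H\le 1$ yields $F_2'(t)\le -C\int_{t_2}^t e^{|F_2(s)|/4}\,ds$, which forces $F_2(t)\to-\infty$ (hence $\Phi(-\infty,t)\to\infty$) in finite time. The mechanism is that on each hyperbola the support sweeps once past the diagonal, and the front records which hyperbola is currently doing so; this avoids having to compare the (negative) transport and (positive) forcing contributions in a single global integral as you attempt.
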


We note that in a recent independent work \cite{HORY}, a 2D model of Boussinesq system with a different Biot-Savart law has been considered, and finite time blow up has been proved by a very different method
involving lower and upper bounds on the solution. The Biot-Savart law of \cite{HORY} is also given by \eqref{bsstruct}. The difference is in the factor $\Omega$ which is similar to the
integral appearing in the main term of \eqref{ML} with integration restricted to a certain sector for technical reasons. However, such Biot-Savart law does not lead to
incompressible flow.

Also, in a recent preprint \cite{TT}, several new models are proposed for the 3D Euler equation, and finite time blow up is proved for these models. The focus of \cite{TT} is on models that
share as many conservation law properties with 3D Euler equation as possible. The modified Biot-Savart laws of \cite{TT} involve replacement of inverse Laplacian with a multi-scale operator.
Our goal here, on the other hand, is to study the specific blow up scenario and to study models designed to develop intuition as well as framework for its further analysis.

The paper is organized as follows. In Section~\ref{prelims} we introduce some useful explicit formulas for the solution as well as sketch a proof of local well-posedness.
In Section~\ref{2deul} we take a detour and consider the hyperbolic analog of the 2D Euler equation, by setting $\rho \equiv 0$ in the hyperbolic Boussinesq system.
We discover that, in some sense, the 2D hyperbolic Euler is ``less singular" than the true 2D Euler, as its solution satisfies just single in time exponential upper bound on the
derivatives of vorticity (which is qualitatively sharp). We note that the factor $\Omega(x_1x_2,t)$ certainly does not satisfy the bound 
\[ \|\Omega(\cdot,t)\|_{L^\infty} \leq C\|\omega(\cdot,t)\|_{L^\infty},\] 
from which the exponential bound on derivatives would easily follow. Instead, the only bound available is similar
to the 2D Euler case and involves a logarithm of higher order norm such as $\|\nabla \omega\|_{L^\infty}$ or $\|\omega\|_{C^\alpha}.$
In the 2D Euler case, this leads to double exponential growth examples, but the 2D hyperbolic Euler provides an interesting
example where such fast growth does not happen due dynamical depletion of nonlinearity.
Finally, in Section~\ref{blowup} we provide a proof of finite time blow up in solutions of the hyperbolic Boussinesq system.

\section{Preliminaries}\label{prelims}

Our first goal is to establish local well-posedness of the hyperbolic Boussinesq system.
It will be convenient for us to make a change of coordinates $z_1 = \log (x_1 x_2),$ $z_2 = \log (x_2/x_1);$ so $x_1 = e^{\frac{z_1-z_2}{2}},$ $x_2 = e^{\frac{z_1+z_2}{2}}.$
We denote $\tilde{\rho}(z,t)= \rho(x(z),t)$ and $\tilde{\omega}(z,t) = \omega(x(z),t).$ We also define
\begin{equation}\label{tildeOm} \tilde{\Omega}(z_1,t) = \Omega(\eta(z),t) = \frac14 \int_{z_1}^\infty d\tilde{z}_1 \int_{\R} \frac{\tilde{\omega}(\tilde{z},t)}{\cosh \tilde{z}_2}\,d\tilde{z}_2; \end{equation}
the last equality can be verified by a straightforward computation making a coordinate change in the integral for $\Omega$ in \eqref{bsmod}.
The equations for $\tilde{\omega},$ $\tilde{\rho}$ then read
\begin{eqnarray}\label{tildeomega}
\partial_t \tilde{\omega} + 2 \tilde{\Omega} \partial_{z_2} \tilde{\omega} = e^{\frac{z_2-z_1}{2}} \tilde{\rho}, \,\,\, \tilde{\omega}(z,0)= \tilde{\omega}_0(z), \\
\label{tilderho}
\partial_t \tilde{\rho} + 2 \tilde{\Omega} \partial_{z_2} \tilde{\rho} =0, \,\,\, \tilde{\rho}(z,0) = \tilde{\rho}_0(z).
\end{eqnarray}
In the $x$ coordinates, we think of the initial data $\omega_0,\rho_0$ as smooth, non-negative, with support contained in some rectangle $0<\delta \leq x_1 \leq C,$ $0 \leq x_2 \leq C.$ We will see that for the
finite time blow up argument, only $\rho_0$ is important; we can set $\omega_0=0.$ For the finite time blow up argument, we will also need to assume that $\rho_0$ is
not identically zero on the $x_1$ axis. In the $z$ coordinates, this corresponds to $\tilde{\omega}_0,$ $\tilde{\rho}_0$ supported in the half-strip
\begin{equation}\label{zsupp}
-K \leq 2 \log \delta \leq z_1-z_2 \leq 2\log C \leq K, \,\,\, -\infty < z_1+z_2 \leq 2 \log C \leq K,
\end{equation}
where $K$ is some fixed constant that only depends on $\omega_0.$
Moreover, for all $z_1$ small enough, we have $\int_{\R} \tilde{\rho}_0(z_1, z_2)\,dz_2 \geq c>0;$ this follows from continuity of $\rho_0$ and the fact that it does not vanish on the $x_1$ axis.
The structure of the initial data in both systems of coordinates is illustrated on Figure~\ref{fig_init}.

\begin{figure}[htbp]
\begin{center}
\includegraphics[scale=1]{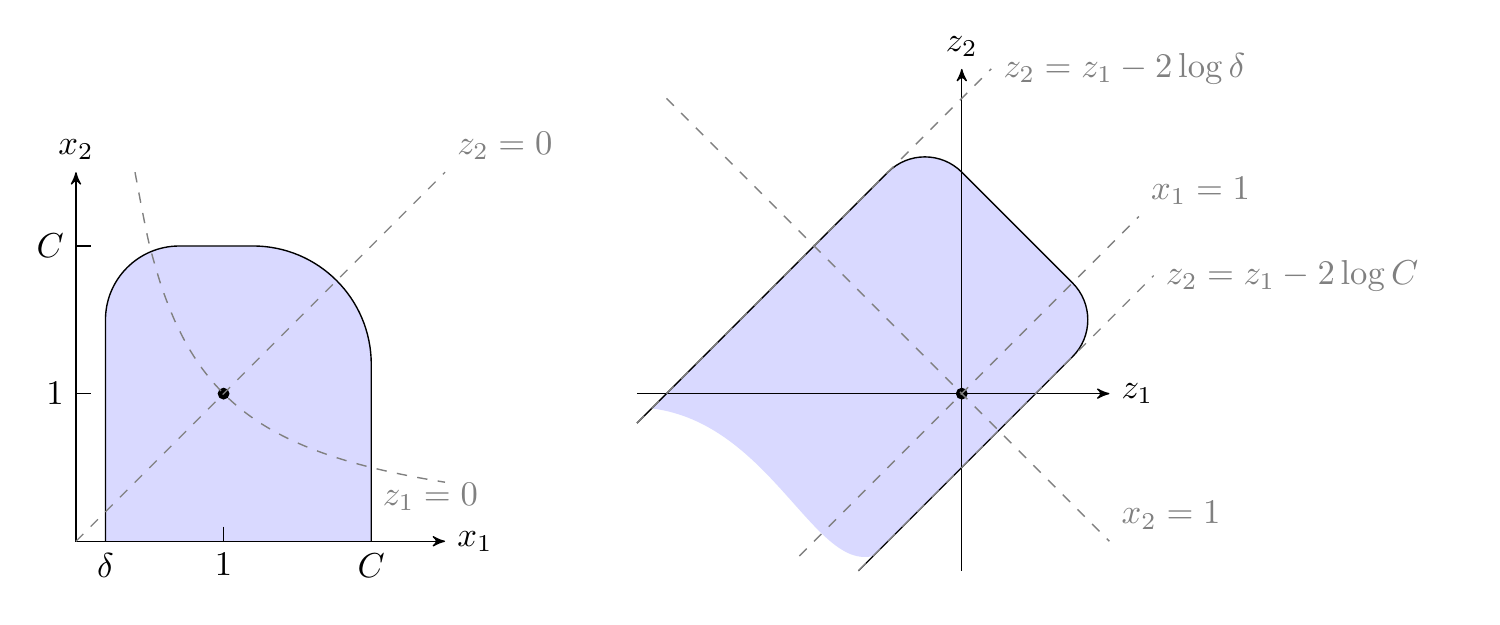}
\caption{The initial data \label{fig_init}}
\end{center}
\end{figure}

For much of the rest of this paper, we will work in the $z$ coordinate representation of the hyperbolic Boussinesq system. Therefore, for the sake of simplicity, we will abuse the notation
and omit $\tilde{ }$ over $\omega,$ $\rho$ and $\Omega$. It will be clear from the context whether we are thinking of these functions in the $z$ or in the original $x$ coordinates.
We can use the method of characteristics to rewrite the system \eqref{tildeomega}, \eqref{tilderho}, \eqref{tildeOm} in an equivalent integral form. Notice that conveniently, in the $z$ coordinates, the first component
of the characteristic does not change. Thus all characteristics are straight lines parallel to $z_2$ axis, and the speed of motion along these lines is modulated by the nonlocal function $\Omega.$
Let us introduce a short cut notation
\begin{equation}\label{phidef}
\Phi(z_1,t) = 2\int_0^t \Omega(z_1,s)\,ds = \frac12 \int_0^t \int_{z_1}^\infty d\tilde{z}_1 \int_{\R} \frac{\tilde{\omega}(\tilde{z}_1,z_2,s)}{\cosh z_2}\,dz_2 ds.
\end{equation}
We obtain
\begin{equation}\label{rhosol}
\rho(z,t) = \rho_0\left(z_1, z_2 - \Phi(z_1,t)\right)
\end{equation}
and
\[ 
\omega(z,t) = \omega_0 \left(z_1, z_2 -\Phi(z_1,t)\right) + \int_0^t f \left(z_1,z_2 - \Phi(z_1,t)+\Phi(z_1,s)\right)\,ds,
\] 
where $f(z,s) = e^{\frac{z_2-z_1}{2}} \rho(z,s).$ Using \eqref{rhosol}, we can rewrite the solution as
\begin{equation}\label{omsol}
\omega(z,t) = \omega_0\left(z_1, z_2 - \Phi(z_1,t)\right)+ f_1\left(z_1, z_2 - \Phi(z_1,t)\right) \int_0^t e^{\frac12 \Phi(z_1,s)}\,ds,
\end{equation}
with $f_1(z_1,z_2) = e^{\frac{z_2-z_1}{2}} \rho_0(z_1,z_2).$ Note that since $\rho_0(z)$ is supported on the finite strip $2 \log \delta \leq z_1-z_2 \leq 2\log C,$ we have that $f_1(z_1, z_2)$
is a bounded function with the same regularity as $\rho_0.$

Let us also provide integral formulas for the solution in the $x$ coordinate representation. These can be obtained directly by solving \eqref{vortmod}, \eqref{denmod}, or by
making a change of coordinates in \eqref{rhosol}, \eqref{omsol}:
\begin{eqnarray}\label{rhox}
\rho(x,t) = \rho_0\left(x_1 e^{\frac12 \Phi(\log(x_1x_2),t)}, 
x_2 e^{-\frac12 \Phi(\log(x_1x_2),t)}\right); \\ 
\omega(x,t) = \omega_0\left(x_1 e^{\frac12 \Phi(\log(x_1x_2),t)}, 
x_2 e^{-\frac12 \Phi(\log(x_1x_2),t)}\right)+ \nonumber \\ 
x_1^{-1} \rho_0\left(x_1 e^{\frac12 \Phi(\log(x_1x_2),t)}, 
x_2 e^{-\frac12 \Phi(\log(x_1x_2),t)} \right) 
\int_0^t e^{-\frac12 (\Phi(\log(x_1x_2), t)-\Phi(\log(x_1x_2),s))}\,ds; \label{vortx}  \\
 \Phi(\log(x_1x_2), t) = \frac14 \int_0^t  \int_{\log(x_1x_2)}^\infty  \int_{\R} \frac{\tilde{\omega}(\tilde{z},s)}{\cosh \tilde{z}_2}\,d\tilde{z}_2 d\tilde{z}_1 ds=
 \label{phieq} \int_0^t \int_{y_1y_2 \geq x_1x_2}\frac{\omega(y,s)}{|y|^2}\,dyds.
\end{eqnarray}

We now begin to discuss the local well-posedness of the hyperbolic Boussinesq. As the first step, let us obtain some a-priori estimates.
\begin{lemma}\label{bkm1}
Suppose that $\omega_0,\rho_0 \in K_n,$ $n \geq 1,$ and that $\omega(x,t)$ and $\Phi(x,t)$ satisfy \eqref{vortx}, \eqref{phieq} for all $x \in D,$ $0 \leq t \leq T.$
Assume that \begin{equation}\label{condreg} {\rm sup}_{x \in D, \, 0 \leq t \leq T} \left| \Phi(\log(x_1x_2),t) \right| \leq B < \infty.\end{equation}
Then $\omega(x,t),\rho(x,t) \in C([0,T],K_n).$
\end{lemma}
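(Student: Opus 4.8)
The plan is to run a Beale--Kato--Majda type argument directly on the explicit solution formulas \eqref{rhox}, \eqref{vortx}, \eqref{phieq} (with $\rho$ given by \eqref{rhox}), using the a priori bound \eqref{condreg} on $\Phi$ to control, uniformly on $[0,T]$, the support, the quantity $\delta_f$, and the $C^n$ norm of $\omega(\cdot,t)$ and $\rho(\cdot,t)$, and to deduce continuity in $t$. Write $\psi(x,t)=\Phi(\log(x_1x_2),t)$ and $\Psi(\eta,t)=\Phi(\log\eta,t)=\int_0^t\int_{y_1y_2\ge\eta}|y|^{-2}\omega(y,s)\,dy\,ds$, so that $\psi(x,t)=\Psi(x_1x_2,t)$, and set $\delta_0=\min(\delta_{\omega_0},\delta_{\rho_0})$. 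The one structural fact driving the argument is read off from \eqref{phidef}: $\partial_{z_1}^k\Phi(z_1,t)=-\tfrac12\int_0^t\int_{\R}(\cosh z_2)^{-1}\,\partial_{z_1}^{k-1}\omega(z_1,z_2,s)\,dz_2\,ds$; equivalently, each $\partial_\eta^k\Psi(\eta,t)$ involves only derivatives of $\omega$ of order $\le k-1$. Hence the a priori estimates close by induction on the order of differentiation, with no nonlinear feedback. I would begin with the geometry: by \eqref{rhox}, ${\rm supp}\,\rho(\cdot,t)$ is the image of ${\rm supp}\,\rho_0$ under $y\mapsto(y_1e^{-\psi/2},y_2e^{\psi/2})$ with $\psi=\Psi(y_1y_2,t)$, which in the $z$ variables is the shear $z_2\mapsto z_2+\Phi(z_1,t)$, hence measure preserving, and which by \eqref{condreg} keeps $x_1\ge\delta_0e^{-B/2}$; similarly, by \eqref{vortx}, ${\rm supp}\,\omega(\cdot,t)$ lies in the union of the images of ${\rm supp}\,\omega_0$ and ${\rm supp}\,\rho_0$. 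This bounds $|{\rm supp}\,\rho(\cdot,t)|$, $|{\rm supp}\,\omega(\cdot,t)|$ and $\delta_{\rho(\cdot,t)}^{-1},\delta_{\omega(\cdot,t)}^{-1}$ on $[0,T]$. The zeroth order bounds are immediate: $\|\rho(\cdot,t)\|_{L^\infty}=\|\rho_0\|_{L^\infty}$, and \eqref{vortx} with \eqref{condreg} and $x_1^{-1}\le\delta_0^{-1}e^{B/2}$ on ${\rm supp}\,\rho(\cdot,t)$ gives $\|\omega(\cdot,t)\|_{L^\infty}\le\|\omega_0\|_{L^\infty}+C(B,\delta_0,T)\|\rho_0\|_{L^\infty}$.

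Next I would establish that $\Psi(\cdot,t)\in C^n([0,\infty))$ with $\|\partial_\eta^j\Psi(\cdot,t)\|_{L^\infty}\le C(B,\delta_0,T,j)\,\bigl(1+\sup_{s\le t}\|\omega(\cdot,s)\|_{C^{j-1}}\bigr)$ for $1\le j\le n$, so that $\psi(x,t)=\Psi(x_1x_2,t)$ is $C^n$ on $D$ with controlled norms. For $\eta>0$ this is routine differentiation under the integral. The delicate point is at $\eta=0$, where the level curves $\{x_1x_2=\eta\}$ --- the hyperbolic trajectories of the system --- collapse onto the coordinate axes: here $\delta_0>0$ is used. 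Parametrizing the inner integral by $y_1$ exhibits $\partial_\eta\Psi(\eta,t)$ as $-\int_0^t\int_0^\infty y_1(y_1^4+\eta^2)^{-1}\,\omega(y_1,\eta/y_1,s)\,dy_1\,ds$, whose integrand is $C^n$ in $\eta$ up to $\eta=0$ because $y_1^4+\eta^2\ge(\delta_0e^{-B/2})^4$ on ${\rm supp}\,\omega(\cdot,s)$; differentiating under the integral then gives the stated bounds.

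The higher order estimates now close by induction. Put $N_k(t)=\sup_{s\le t}\bigl(\|\omega(\cdot,s)\|_{C^k}+\|\rho(\cdot,s)\|_{C^k}\bigr)$, so $N_0(t)\le C_0$. Assuming $N_{k-1}(t)\le C_{k-1}$ on $[0,T]$, differentiate \eqref{rhox} and \eqref{vortx} $k$ times in $x$ via the Leibniz and Fa\`a di Bruno formulas: each term involves at most $k$ derivatives of $\psi$ --- and, in \eqref{vortx}, at most $k$ derivatives of the factor $\int_0^te^{-\frac12(\psi(x,t)-\psi(x,s))}\,ds$, estimated the same way using \eqref{condreg} --- hence is bounded by a fixed power of $N_{k-1}(t)$ times $\|\omega_0\|_{C^k}$ or $\|\rho_0\|_{C^k}$ and the geometric data; the factors $x_1^{-1}$ in \eqref{vortx} are harmless since $x_1\ge\delta_0e^{-B/2}$ on ${\rm supp}\,\rho(\cdot,t)$. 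So $N_k(t)\le C_k$ on $[0,T]$, and after $n$ steps $\|\omega(\cdot,t)\|_{C^n},\|\rho(\cdot,t)\|_{C^n}$ are controlled. For continuity in $t$: $\Psi(\eta,t)-\Psi(\eta,t')=\int_{t'}^tH(\eta,s)\,ds$ with $H(\eta,s):=\int_{y_1y_2\ge\eta}|y|^{-2}\omega(y,s)\,dy$ bounded in $C^n_\eta$ uniformly in $s\le T$, so $\|\psi(\cdot,t)-\psi(\cdot,t')\|_{C^n}\le C|t-t'|$; with uniform continuity of $\omega_0,\rho_0$ and their derivatives of order $\le n$ on the compact initial supports, \eqref{rhox} and \eqref{vortx} then give $\|\omega(\cdot,t)-\omega(\cdot,t')\|_{C^n},\|\rho(\cdot,t)-\rho(\cdot,t')\|_{C^n}\to0$ as $t'\to t$, while the supports and $\delta_f$'s vary continuously. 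Hence $\omega,\rho\in C([0,T],K_n)$.

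The step I expect to be the main obstacle is the $C^n$ regularity of $\Psi$ up to $\eta=0$, i.e. full regularity of $\omega,\rho$ along the $x_1$-axis $\{x_2=0\}$, where the hyperbolic trajectories degenerate onto the boundary. Differentiating $\psi=\Psi(x_1x_2,t)$ naively produces spurious factors $x_2^{-j}$; these cancel precisely because $\Psi(\eta,t)$ is genuinely $C^n$ up to $\eta=0$, which is seen via the change of variables above and essentially uses the hypothesis $\delta_f>0$ (that ${\rm supp}\,\omega$ stays away from the $x_2$-axis). Everything else is bookkeeping once the identity for $\partial_{z_1}^k\Phi$ and its no-feedback consequence are in hand.
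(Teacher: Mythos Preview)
Your proposal is correct and follows essentially the same route as the paper. The paper, like you, bounds the support and $\delta$ directly from \eqref{rhox}, \eqref{vortx} and \eqref{condreg}, then computes $\partial_\eta\Omega(\eta,t)=\int_{\delta(t)}^{C}\frac{dy_1}{y_1}\,\frac{\omega(y_1,\eta/y_1,t)}{y_1^2+(\eta/y_1)^2}$ (your formula for $\partial_\eta\Psi$ after clearing denominators), notes this is controlled by $\|\omega(\cdot,t)\|_{L^\infty}\delta(t)^{-2}$, and then runs the same induction you describe: each further $\eta$-derivative of $\Omega$ costs only one more $x$-derivative of $\omega$, so the estimates close without feedback. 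Your write-up is in fact more careful than the paper's about the regularity of $\Psi$ up to $\eta=0$ and about the continuity in $t$, but the underlying mechanism is identical.
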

\begin{proof}
Let $\delta(t) = {\rm min}(\delta_{\omega(\cdot, t)}, \delta_{\rho(\cdot,t)}).$
From \eqref{vortx}, \eqref{rhox} and \eqref{condreg} it follows that
\[ \|\omega(\cdot, t)\|_{L^\infty} \leq \|\omega_0\|_{L^\infty} + \delta(0)^{-1} e^{3B} T \|\rho_0\|_{L^\infty}, \]
for every $0 \leq t \leq T.$ Moreover, $\delta(t) \geq \delta(0) e^{-B/2}.$
It remains to estimate the derivatives of the solution. Let us consider the first order derivatives of $\omega(x,t),$ $\rho(x,t)$ in \eqref{rhox}, \eqref{vortx}.
Let us use the representation
\begin{equation}\label{phixcord} \Phi(\log(x_1x_2), t) = 2\int_0^t \Omega(x_1x_2,s)\,ds = 2\int_0^t \int_{y_1y_2 \geq x_1x_2}\frac{\omega(y,s)}{|y|^2}\,dyds \end{equation}
from \eqref{phieq}. Then the only expression which appears that is not already clearly controlled is
$\partial_\eta \Omega(\eta, t),$ where we are using the $x$ coordinate representation
\[ \Omega(\eta,t) = \int_{y_1y_2 \geq \eta}\frac{\omega(y,s)}{|y|^2}\,dy. \]
But we can estimate
\[ \partial_\eta \Omega(\eta, t) =  \lim_{h \rightarrow 0} \frac{1}{h} \int_{\delta(t)}^C dy_1 \int_{\frac{\eta}{y_1}}^{\frac{\eta+h}{y_1}} dy_2 \frac{\omega(y,t)}{|y|^2} =
\int_{\delta(t)}^C \frac{dy_1}{y_1} \frac{\omega(y_1, \frac{\eta}{y_1}, t)}{y_1^2 + \left(\frac{\eta}{y_1}\right)^2} \leq C \|\omega(\cdot, t)\|_{L^{\infty}} \delta(t)^{-2}. \]
Higher order derivatives can be estimated inductively up to the level of regularity for the initial data - second order derivative for $\omega$ will involve terms that are clearly bounded plus
second order derivatives of $\Omega(\eta, t)$ which can be controlled by using bounds on the first order derivatives of $\omega(x,t)$ and so on. Continuity of the derivatives of $\omega$ and $\rho$
in time follows from continuity in time for $\omega(x,t)$ and $\rho(x,t)$ as is clear from \eqref{rhox}, \eqref{vortx} and an inductive argument.
\end{proof}

Due to Lemma~\ref{bkm1}, to prove Theorem~\ref{local} it suffices to construct a solution with a bounded $\Phi$ and $\omega$ (and then to address uniqueness). For this purpose, it will be more
convenient for us to work in the $z$ coordinates. The key equations are clearly the vorticity equation \eqref{omsol} and the phase equation \eqref{phidef}; the equation for density effectively
decouples and can be easily solved once we have solved the other two.


Now we sketch the proof of the existence and uniqueness of solutions to \eqref{omsol}, \eqref{phieq}. \\
\begin{proof}[Proof of Theorem~\ref{local}]
1. Uniform bound on the iterates.
Set $\omega_0(z,t)=\omega_0(z),$ $\Phi_0(z_1,t)=0.$ Iteratively, define
\begin{eqnarray}\label{omit}
\omega_n(z,t) = \omega_0(z_1, z_2 - \Phi_{n-1}(z_1,t))+f_1(z_1,z_2-\Phi_{n-1}(z_1,t)) \int_0^t e^{\Phi_{n-1}(z_1,s)}\,ds; \\
\Omega_n(z_1,t) = \frac14 \int_{z_1}^\infty d\tilde{z}_1 \int_{\R} \frac{\omega_n(\tilde{z},s)}{\cosh \tilde{z}_2}\,d\tilde{z}_2;
\label{Omegait} \\ \label{phiit}
\Phi_n(z_1,t) = 2 \int_0^t \Omega_n(z_1,s)\,ds.
\end{eqnarray}
Let us set
\begin{eqnarray*}
\Gamma_n(t) = {\rm sup}_{z_1, m \leq n} |\Omega_m(z_1,t)| \equiv \frac12 {\rm sup}_{z_1, m \leq n} |\partial_t \Phi_m(z_1,t)|; \\
M_n(t) = {\rm sup}_{z, \,m \leq n}|\omega_m(z,t)|, \,\,\,\,L_n(t) = {\rm sup}_{z_1, \,m \leq n}|\Phi_m(z_1,t)|.
\end{eqnarray*}
Observe that
\begin{eqnarray}
L_n(t) \leq 2 \int_0^t \Gamma_n(s)\,ds \label{lcon} \\
\Gamma_n(t) \leq C (1+L_{n-1}(t))M_n(t). \label{gamcon}
\end{eqnarray}
Indeed, due to our assumptions on the initial data \eqref{zsupp} and the structure of the solution \eqref{omsol}, we have
\[ {\rm supp} \,\omega_n(z_1,z_2, t) \subset \{ z_2 \leq z_1 +C + L_{n-1}(t), \} \]
with $C=-2\log \delta(0)$ according to \eqref{zsupp} and so depending only on the initial data.
In general, throughout the paper $C$ will denote constants that may change line to line but can only depend on the initial data; sometimes
we will make this dependence explicit.
It follows that for every $z_1,$
\begin{equation}\label{Omest11} |\Omega_n(z_1,t)| \leq C \int_{\tilde{z}_1 \geq z_1} d\tilde{z}_1 \int_{-\infty}^{\tilde{z}_1+C + L_{n-1}(t)} \frac{M_n(t) dz_2}{\cosh z_2} \leq
CM_n(t) (1+ L_{n-1}(t)), \end{equation} with constant $C$ independent of $n.$
Since by definition $L_{n-1}(t) \leq L_n(t),$ the estimates \eqref{lcon}, \eqref{gamcon}, \eqref{Omest11} and Gronwall lemma together imply that
\begin{equation}\label{Lcontrol} L_n(t) \leq e^{C \int_0^t M_n(s)\,ds}-1. \end{equation}
Then \eqref{omsol} leads to
\begin{equation}\label{eqF} M_n(t) \leq C \left(1+ \int_0^t e^{C\int_0^s M_n(r)\,dr}\,ds \right), \,\,\,M_n(0)=\|\omega_0\|_{L^\infty}. \end{equation}
Therefore, $M_n(t) \leq \bar M_n (t),$ where $\bar M_n (t)$ satisfies equality instead of an inequality in \eqref{eqF}.
Clearly there exists $T>0$ such that $\bar M_n \leq C < \infty$ for every $t \in [0,T].$ Then \eqref{lcon}, \eqref{gamcon} imply that $L_n(t)$
is bounded on $[0,T]$ as well, with $T$ as well as the upper bound $C$ independent of $n.$

2. Convergence. We now show that on time interval $[0,T],$ the approximations $\omega_n,$ $\Phi_n$ converge uniformly over the compact sets in $\R^2$
to bounded functions $\omega,$ $\Phi$ which solve \eqref{omsol}, \eqref{phidef}.

Let us denote
\[ G_n(z_1,t) = {\rm sup}_{z_2, \,\tilde{z}_1 \geq z_1, \, m \leq n}|\omega_m(\tilde{z}_1, z_2, t) - \omega_{m-1}(\tilde{z}_1, z_2, t)|. \]
Observe that
\begin{eqnarray} |\Phi_n(z_1,t)- \Phi_{n-1}(z_1,t)| \leq \left| \int_0^t ds \int_{z_1}^{C} d\tilde{z}_1 \int \frac{\omega_n(\tilde{z}_1, z_2, s) -
\omega_{n-1}(\tilde{z}_1, z_2, s)}{\cosh z_2}\,dz_2 \right| \nonumber \\
\leq C (|z_1|+1) \int_0^t G_n (z_1, s)\,ds; \label{exist123}
\end{eqnarray}
the constants $C$ here depend only on the initial data.
On the other hand, for $t \in [0,T]$ we have
\begin{eqnarray}
|\omega_n(z_1,z_2,t)-\omega_{n-1}(z_1,z_2,t)| \leq \left(\|\nabla\omega_0\|_{L^\infty} +C\|\nabla f_1\|_{L^\infty}\right)|\Phi_{n-1}(z_1,t)-\Phi_{n-2}(z_1,t)| \nonumber \\
+C \int_0^t |\Phi_{n-1}(z_1,s)-\Phi_{n-2}(z_1,s)|\,ds. \label{exist124}
\end{eqnarray}
Here $C$ needs to be chosen large enough so that $(\|f_1\|_{L^\infty}+\|\nabla f_1\|_{L^\infty})e^{\|\Phi_m(\cdot,t)\|_{L^\infty}}\leq C$ for $m=n-1,n-2$ and
$t \in [0,T].$ By combining the bounds \eqref{exist123}, \eqref{exist124}, we find
\[  G_n(z_1,t) \leq C(1+|z_1|)\int_0^t G_{n-1}(z_1,s)\,ds \] 
with a constant $C$ that depends only on the initial data and $T.$
Iterating, we obtain \[ G_n(z_1,t) \leq \frac{C^n (1+|z_1|)^n t^n}{n!}. \]
This makes $\omega_n$ Cauchy on all plane for every $t \in [0,T];$ convergence is uniform on any set $z_1 \geq A,$ $0 \leq t \leq T.$
Such convergence as well as uniform $L^\infty$ bound on $\omega_n$ also implies the same type of convergence $\Phi_n(z_1,t) \rightarrow \Phi(z_1,t).$
Moreover, it is straightforward to check that $\omega,$ $\Phi$ satisfy \eqref{omsol}, \eqref{phidef}.

3. Uniqueness. The uniqueness of a bounded solution $\omega,$ $\Phi$ on $[0,T]$ follows very similarly to the convergence part by looking at the
difference of two solutions and using the upper bound on solutions as well as the resulting differential inequality. We omit the details.

By Lemma~\ref{bkm1}, the proof of Theorem~\ref{local} is now complete.
\end{proof}

Finally, let us state one more regularity criterion that is claimed in Theorem~\ref{bu}. This result is the direct analog of the well known Beale-Kato-Majda criterion for the 3D Euler equation \cite{BKM}.
\begin{proposition}\label{truebkm}
Let $\omega(x,t),$ $\rho(x,t)$ be $C([0,T],K_n)$ solution of the system \eqref{vortmod}, \eqref{denmod}, \eqref{bsmod}.
If $T$ is the largest time of existence of such solution, then we must have
\[ \lim_{t \rightarrow T} \int_0^t \|\omega(\cdot, t)\|_{L^\infty}\,dt = \infty. \]
\end{proposition}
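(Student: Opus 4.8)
The plan is to argue by contraposition. Assume $M := \lim_{t\to T^-}\int_0^t \|\omega(\cdot,s)\|_{L^\infty}\,ds$ is finite; I will show the solution extends to a $C([0,T+\epsilon],K_n)$ solution for some $\epsilon>0$, contradicting the maximality of $T$. In view of Lemma~\ref{bkm1}, the whole task reduces to bounding $\Phi$ on $[0,T]$ in terms of $M$ and the data.

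The first step is to re-run, for the actual solution rather than for the iterates, the Gronwall estimate already used in the proof of Theorem~\ref{local}. Working in the $z$ coordinates, the support condition \eqref{zsupp} on the data together with the solution formula \eqref{omsol} forces $\omega(z_1,z_2,t)$ to vanish outside $\{z_1\le K,\ z_2\le z_1+C+L(t)\}$, where $L(t):=\sup_{z_1}|\Phi(z_1,t)|$ and $C,K$ depend only on $\omega_0,\rho_0$ (the $z_1\le K$ part is preserved because the formulas evaluate the data at arguments with unchanged first coordinate). Substituting this into \eqref{tildeOm} and using $\int_{-\infty}^A(\cosh z_2)^{-1}\,dz_2\le\min(\pi,2e^A)$ reproduces the bound \eqref{Omest11} in the form
\[ \sup_{z_1}|\Omega(z_1,t)| \le C\,\|\omega(\cdot,t)\|_{L^\infty}\,(1+L(t)), \]
with $C$ independent of $t$. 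Since $L(t)\le 2\int_0^t\sup_{z_1}|\Omega(z_1,s)|\,ds$, the Gronwall lemma — exactly as in the derivation of \eqref{Lcontrol}, but with the finite quantity $\int_0^t\|\omega(\cdot,s)\|_{L^\infty}\,ds$ in place of $\int_0^t M_n(s)\,ds$ — yields
\[ L(t) \le \exp\!\Bigl(C\!\int_0^t\|\omega(\cdot,s)\|_{L^\infty}\,ds\Bigr)-1 \le e^{CM}-1 =: B \qquad (t<T). \]
As $\Phi(z_1,\cdot)=2\int_0^{\cdot}\Omega(z_1,s)\,ds$ is absolutely continuous, letting $t\to T^-$ gives $\sup_{x\in D,\,0\le t\le T}|\Phi(\log(x_1x_2),t)|\le B$, so hypothesis \eqref{condreg} of Lemma~\ref{bkm1} holds on $[0,T]$.

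Next I would apply Lemma~\ref{bkm1} to conclude $\omega,\rho\in C([0,T],K_n)$; in particular $\omega(\cdot,T),\rho(\cdot,T)\in K_n$, with $K_n$-norms bounded in terms of $B$, $T$, and the data. Applying Theorem~\ref{local} with initial data $(\omega(\cdot,T),\rho(\cdot,T))$ produces a solution on some interval $[T,T+\epsilon]$. Since the phase over $[0,T+\epsilon]$ is simply $\Phi(z_1,t)=2\int_0^t\Omega(z_1,s)\,ds$, the two pieces concatenate into a genuine solution of the integral equations \eqref{vortx}, \eqref{phieq} on $[0,T+\epsilon]$, which lies in $C([0,T+\epsilon],K_n)$ by one more application of Lemma~\ref{bkm1}. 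This contradicts the maximality of $T$ and proves the proposition.

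The only genuinely delicate point is the first step: the support of $\omega$ in the $z_2$ direction is itself driven by $\Phi$, so one has to verify that the nonlocal factor $\Omega$ can still be bounded by $\|\omega\|_{L^\infty}$ times a quantity that is at worst \emph{linear} in $\sup|\Phi|$ — this linear (rather than exponential) dependence is precisely what lets the Gronwall loop close, and it is the same mechanism already exploited in the local well-posedness argument. Everything after that is a routine Beale–Kato–Majda-type continuation argument.
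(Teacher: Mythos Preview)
Your argument is correct and follows essentially the same route as the paper: you reproduce the Gronwall estimate \eqref{Lcontrol} for the actual solution to obtain the bound \eqref{Lexpcon1} on $L(t)$, and then invoke Lemma~\ref{bkm1} to close the continuation argument. The paper's proof is simply a two-line sketch of exactly this, so your write-up differs only in that it spells out the support/Gronwall computation and the standard extension-past-$T$ step in more detail.
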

\begin{proof}
Define
\[ L(t) = {\rm sup}_{z_1}|\Phi(z_1,t)|. \]
An argument parallel to one leading to \eqref{Lcontrol} then gives global in time bound
\begin{equation}\label{Lexpcon1} L(t) \leq C e^{C\int_0^t\|\omega(\cdot,s)\|_{L^\infty}\,ds}, \end{equation}
where the constant $C$ only depends on the initial data.
Now global regularity follows from Lemma~\ref{bkm1}.
\end{proof}

The last issue we would like to discuss in this section is to come back to the different choice of the kernel in the definition of $\Omega$ in \eqref{bsmod}.
In the introduction, we mentioned that taking this kernel to be $y_1y_2/|y|^4,$ which follows \eqref{ML} more closely, does not result in an essential change of the analysis of the system.
Indeed, the analog of $z$ representation of $\Omega$ in \eqref{tildeOm} becomes
\[ \tilde{\Omega}(z_1,t) = \frac18 \int_{z_1}^\infty d\tilde{z}_1 \int_{\R} \frac{\tilde{\omega}(\tilde{z},t)}{(\cosh \tilde{z}_2)^2}\,d\tilde{z}_2, \] 
and analysis in this section as well as below can proceed along the same path and with identical conclusion.

\section{The analog of the 2D Euler equation}\label{2deul}

The goal of this section is to provide more intuition on properties of the ``hyperbolic" Biot-Savart law \eqref{bsmod}. For this purpose, we will consider
``hyperbolic" 2D Euler equation, which is obtained by taking $\rho \equiv 0$ in \eqref{vortmod}:
\begin{eqnarray}
\label{vort2de}
\partial_t \omega + (u \cdot \nabla) \omega = 0, \,\,\, \omega(x,0)=\omega_0(x), \\
u = (-x_1 \Omega(x_1x_2,t), x_2 \Omega(x_1x_2,t)), \,\,\, \Omega(x_1x_2,t) = \int_{y_1y_2 \geq x_1x_2}\frac{1}{|y|^2}\omega(y,t)\,dy. \label{bs2de}
\end{eqnarray}
 We will see that, similarly to the 2D Euler equation,
its hyperbolic analog is globally regular. However, there is one interesting difference - the hyperbolic version of 2D Euler is in some sense
more regular than the real 2D Euler equation. Namely, the rate of growth of the derivatives of solutions of the hyperbolic 2D Euler equation can only be
exponential in time. For the sake of simplicity, we will restrict ourselves to the initial data that is positive on $D$ - note that the double
exponential growth examples of \cite{KS} involve exactly this class of the initial data.

\begin{theorem}\label{hypexp}
Suppose $\omega_0 \in K_n,$ $n \geq 1.$ Then the system \eqref{vort2de}, \eqref{bs2de} set in $D$ has a unique global solution $\omega(x,t) \in K_n.$
Moreover, assume $\omega_0 \geq 0$ in $D.$ Then
\begin{equation}\label{expupper}
\|\omega(\cdot,t)\|_{K_n} \leq C e^{Ct},
\end{equation}
where the constant $C$ only depends on the initial data.

Moreover, there exist initial data $\omega \in K_n,$ $n \geq 1,$ for which the exponential in time growth of derivatives  (including the first order ones)
of the corresponding solution is realized.
\end{theorem}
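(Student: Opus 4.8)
The plan is to prove the three assertions in turn: global existence, the exponential upper bound for positive data, and a construction realizing exponential growth.

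\emph{Global existence.} Since $\rho\equiv 0$, formula \eqref{omsol} degenerates to $\omega(z,t)=\omega_0(z_1,z_2-\Phi(z_1,t))$, i.e. vorticity is purely transported by the shear, so $\|\omega(\cdot,t)\|_{L^\infty}=\|\omega_0\|_{L^\infty}$ is conserved. Theorem~\ref{local} produces a local solution in $C([0,T],K_n)$, and since $\int_0^t\|\omega(\cdot,s)\|_{L^\infty}\,ds=t\|\omega_0\|_{L^\infty}<\infty$ for every finite $t$, Proposition~\ref{truebkm} forces the maximal time of existence to be $+\infty$. (This part does not use $\omega_0\ge 0$.)

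\emph{The exponential upper bound.} Assume $\omega_0\ge 0$; transport preserves positivity, hence $\Omega\ge0$ and $\Phi\ge0$. Work in $z$-coordinates, where the regularity of $\omega(\cdot,t)=\omega_0(z_1,z_2-\Phi(z_1,t))$ is governed entirely by that of $\omega_0$ and by the $z_1$-derivatives of $\Phi(\cdot,t)$. The key point — ``dynamical depletion'' — is that differentiating \eqref{tildeOm} in $z_1$ gives
\[
\partial_{z_1}\Omega(z_1,t)=-\frac14\int_{\R}\frac{\omega(z_1,z_2,t)}{\cosh z_2}\,dz_2,
\]
so $|\partial_{z_1}^k\Omega(z_1,t)|\le C\|\partial_{z_1}^{k-1}\omega(\cdot,t)\|_{L^\infty}$ with $C$ independent of $z_1$ and $t$ (using $\int_{\R}(\cosh z_2)^{-1}dz_2=\pi$), even though $\Omega$ itself has no uniform $L^\infty$ bound. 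Since $\partial_{z_1}^k\Phi=2\int_0^t\partial_{z_1}^k\Omega\,ds$ and $\|\partial_{z_1}^k\omega(\cdot,t)\|_{L^\infty}$ is controlled by $\|\omega_0\|_{C^k}$ and by $\|\partial_{z_1}^j\Phi(\cdot,t)\|_{L^\infty}$, $j\le k$, an induction on $k$ yields polynomial-in-$t$ bounds on $\|\partial_{z_1}^k\Phi(\cdot,t)\|_{L^\infty}$ and hence on $\|\omega(\cdot,t)\|_{C^n}$ in the $z$-variables. To transfer this, together with control of the support, to the $x$-variables: the $x$-flow is measure preserving, so $|\mathrm{supp}\,\omega(\cdot,t)|$ is conserved; and along the trajectory through $a$ one has $X_1(t)=a_1\exp(-\tfrac12\Phi(\log(a_1a_2),t))$, so a linear-in-$t$ bound on $\Phi$ over the $z_1$-range occupied by the support gives $\delta_{\omega(\cdot,t)}^{-1}\le Ce^{Ct}$, while the coordinate change (Jacobian involving $x_1^{-1},x_2^{-1}$) upgrades the polynomial $C^n$ bound to an exponential one, establishing \eqref{expupper}. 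The needed linear bound on $\Phi$ comes from a Gronwall argument parallel to the derivation of \eqref{Lcontrol}, using the elementary estimate $\Omega(z_1,t)\le C(1+\max(0,-z_1))$ that follows from the structure \eqref{tildeOm}, the conservation of $\|\omega\|_{L^\infty}$, and the conservation of the range of $y_1y_2$ along trajectories. (The estimate is cleanest when $\mathrm{supp}(\omega_0)$, hence $\mathrm{supp}(\omega(\cdot,t))$, stays away from both coordinate axes, which is the situation of interest.)

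\emph{Realizing exponential growth.} I would choose $\omega_0\ge0$, smooth and compactly supported, bounded below by a positive constant on a small ball around some fixed $z^\ast=(z_1^\ast,z_2^\ast)$, arranged so that for every $s\ge0$ the set to which the shear carries $\{\tilde z_1\gtrsim z_1^\ast,\ z_2\approx 0\}$ still meets this ball, i.e. so that transport never depletes the vorticity near the line $z_2=0$ to the right of $z_1^\ast$. Then
\[
\Omega(z_1^\ast,s)=\frac14\int_{z_1^\ast}^{\infty}d\tilde z_1\int_{\R}\frac{\omega(\tilde z_1,z_2,s)}{\cosh z_2}\,dz_2\ \ge\ c>0
\]
uniformly in $s$, so $\Phi(z_1^\ast,t)=2\int_0^t\Omega(z_1^\ast,s)\,ds\ge 2ct$. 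Since $X_1(t)=a_1\exp(-\tfrac12\Phi(\log(a_1a_2),t))$ along the hyperbola through $a$, the support is dragged toward the $x_2$-axis at exponential rate, so $\delta_{\omega(\cdot,t)}^{-1}\ge c'e^{ct}$, and through the $x_1^{-1}$ factor in the coordinate change this forces $\|\nabla_x\omega(\cdot,t)\|_{L^\infty}$ to grow at least exponentially too, matching \eqref{expupper} up to constants.

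\emph{Main obstacle.} The crux is the a priori control, in the upper bound, of $\sup_{z_1}|\Phi(z_1,t)|$ — equivalently of $\|\Omega(\cdot,t)\|_{L^\infty}$, equivalently of $\delta_{\omega(\cdot,t)}^{-1}$. Unlike for the true 2D Euler equation, $\Omega$ has no $L^\infty$ bound in terms of $\|\omega\|_{L^\infty}$ alone; only a bound involving a logarithm of a higher-order norm is available, and naively this reproduces the double-exponential rate. The resolution is precisely the depletion identity: $\partial_{z_1}\Omega$, in contrast to $\Omega$, is uniformly bounded, so the higher-order norm inside the logarithm grows only polynomially and the feedback loop closes at single-exponential rate. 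For the growth construction the dual difficulty is quantitative — one must verify that the chosen datum keeps $\Omega(z_1^\ast,\cdot)$ bounded below for \emph{all} time, which is what distinguishes genuinely growing solutions from those whose norms merely saturate.
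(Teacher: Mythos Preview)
Your global-existence argument is correct and matches the paper. Your broad strategy for the upper bound --- reduce everything to a linear-in-$t$ bound on $\sup_{z_1}|\Phi(z_1,t)|=\Phi(-\infty,t)$, then read off exponential control of $\delta^{-1}$ and of the $x$-derivatives from the flow map --- is also the right reduction. But the core step, the linear bound on $\Phi(-\infty,t)$, does not follow from any of the ingredients you invoke, and this is precisely the heart of the theorem.

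The Gronwall argument ``parallel to \eqref{Lcontrol}'' rests on $\|\Omega(\cdot,t)\|_{L^\infty}\le C\|\omega\|_{L^\infty}(1+\|\Phi(\cdot,t)\|_{L^\infty})$ and yields $\Phi(-\infty,t)\le e^{Ct}-1$, i.e.\ exactly the double-exponential bound of Lemma~\ref{doublexp}, not a linear one. Your pointwise estimate $\Omega(z_1,t)\le C(1+\max(0,-z_1))$ is correct but useless at $z_1=-\infty$. The depletion identity $|\partial_{z_1}\Omega|\le C\|\omega\|_{L^\infty}$ is also correct and does give polynomial growth of the $z$-derivatives of $\omega$, but it says nothing about $\Omega(-\infty,t)$ itself, and only the latter governs $\delta(t)^{-1}$ and the $x$-derivatives. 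The ``feedback loop'' you describe does not close at single-exponential rate: the only available log bound is $\Omega(-\infty,t)\le C(1+\log\delta(t)^{-1})=C(1+\tfrac12\Phi(-\infty,t))$, which again integrates to exponential $\Phi$. Your parenthetical restriction to data supported away from both axes does give a trivially bounded $\Omega(-\infty,t)$ (since the $z_1$-support is then compact), but that same restriction rules out the data needed for the growth construction, so you cannot prove both halves of the theorem this way.

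The paper's proof of \eqref{expupper} for $\omega_0$ not vanishing on $\partial D$ is genuinely different and much more delicate. It introduces the front $F_1(t)=\min\{z_1:H(z_1,t)=-B\}$ with $H=z_1+\Phi$, shows (Proposition~\ref{frontstr}, Corollary~\ref{minB}) that $\partial_{z_1}H\ge 1-\epsilon$ to the left of the front, and then (Proposition~\ref{key2deprop}) runs a self-regulating argument: whenever $\Omega(F_1(t),t)$ reaches a large threshold $A_0$, the resulting fast shear pushes the bulk of the support past the peak of $(\cosh z_2)^{-1}$ within a fixed short time, forcing $\Omega$ back below $A_0/2$. This yields a \emph{uniform-in-time} bound $\Omega(-\infty,t)\le A$, from which the linear bound on $\Phi$ and \eqref{expupper} follow via the first inequality in \eqref{upperdoubleexp}. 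No Gronwall or log-estimate substitutes for this dynamical mechanism.

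For the growth example, a ``small ball'' in $z$-coordinates cannot keep $\Omega(z_1^\ast,s)\ge c>0$ for all $s$: the shear sends the $z_2$-support to $+\infty$, so maintaining mass near $z_2=0$ for all time forces ${\rm supp}\,\omega_0$ to extend to $z_2=-\infty$, i.e.\ $\omega_0$ must not vanish on the $x_1$-axis. That is exactly the hypothesis of Lemmas~\ref{constdrive} and~\ref{Omlb}, which then give $\Omega(-\infty,t)\ge\gamma>0$ and hence $\delta(t)^{-1}\ge c\,e^{\gamma t}$ via \eqref{deltaest}. But such data are precisely those your upper-bound argument does not cover.
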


Global regularity of the solution follows immediately from Proposition~\ref{truebkm}, once we observe that
$\|\omega(\cdot,t)\|_{L^\infty} = \|\omega_0\|_{L^\infty}$ while the solution is still regular.
Moreover, from the explicit formula for solution \eqref{vortx}, it follows that
higher order derivatives of $\omega(x,t)$ satisfy double exponential upper bound in time:
\begin{lemma}\label{doublexp}
Suppose $\omega_0 \in K_n,$ $n \geq 1,$ and $\omega(x,t)$ is the corresponding global solution of \eqref{vort2de}, \eqref{bs2de} in $D.$
Then the higher order derivatives of $\omega(x,t)$ satisfy
\begin{equation}\label{upperdoubleexp}
\|D^l \omega (x,t)\|_{L^\infty} \leq Ce^{C\Phi(\log (x_1x_2),t)} \leq Ce^{Ce^{Ct}}
\end{equation}
for every $1 \leq l \leq n,$ where the constant $C$ depends only on $\omega_0$ and $l.$
\end{lemma}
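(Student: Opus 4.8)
The plan is to read the lemma off the explicit solution formula. Setting $\rho_0\equiv 0$ in \eqref{vortx} gives $\omega(x,t)=\omega_0(\Psi_t x)$, where $\Psi_t x:=\bigl(x_1 e^{\Phi(\log(x_1x_2),t)/2},\,x_2 e^{-\Phi(\log(x_1x_2),t)/2}\bigr)$ is the (explicit) backward flow map, which preserves each hyperbola $x_1x_2=\mathrm{const}$. Thus it suffices to bound the $C^l$--norm of $\Psi_t$ on the time--$t$ support of $\omega$, on which, since $\Psi_t x\in\mathrm{supp}\,\omega_0\subseteq[\delta_0,C]\times[0,C]$, one has $\delta_0 e^{-\Phi/2}\le x_1\le Ce^{-\Phi/2}$, $0\le x_2\le Ce^{\Phi/2}$, $x_1x_2\le C^2$, and in particular $x_1^{-1}\le\delta_0^{-1}e^{\Phi/2}$. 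I also record that in the hyperbolic $2$D Euler case $\|\omega(\cdot,t)\|_{L^\infty}=\|\omega_0\|_{L^\infty}$ by transport, so \eqref{Lexpcon1} yields $L(t):=\sup_{z_1}|\Phi(z_1,t)|\le Ce^{Ct}$; hence any bound of the form $C(1+t)^{C}e^{C\Phi(\log(x_1x_2),t)}$ automatically implies the claimed $Ce^{Ce^{Ct}}$.

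The key quantitative input concerns the derivatives of $\Phi$ in its first argument. Differentiating \eqref{phidef} (in which $z_1$ enters only through the lower limit of the $\tilde z_1$--integral), one obtains, for $k\ge 1$,
\[ \partial_{z_1}^{k}\Phi(z_1,t)=-\tfrac12\int_0^t\!\!\int_{\R}\frac{\partial_{z_1}^{k-1}\omega(z_1,z_2,s)}{\cosh z_2}\,dz_2\,ds, \qquad \partial_{z_2}\Phi\equiv 0 . \]
For fixed $z_1$ the function $\omega(z_1,\cdot,s)$ is supported in a $z_2$--window of bounded width centred at $z_1+\Phi(z_1,s)$ (read off from \eqref{omsol}, \eqref{zsupp}), so $\int_{\R}(\cosh z_2)^{-1}|\partial_{z_1}^{k-1}\omega(z_1,z_2,s)|\,dz_2$ is $\le C\sup_{z_2}|\partial_{z_1}^{k-1}\omega(z_1,z_2,s)|$ always, and is $\le C\,e^{z_1+\Phi(z_1,s)}\sup_{z_2}|\partial_{z_1}^{k-1}\omega|=C\,x_1x_2\,e^{\Phi(z_1,s)}\sup_{z_2}|\partial_{z_1}^{k-1}\omega|$ once $z_1+\Phi(z_1,s)\le 2\log\delta_0$. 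This decay, inherited from $1/\cosh z_2$, is exactly what will absorb the singular Jacobian factors below.

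With these ingredients I close an induction on $l$. Assuming $|D^{j}\omega(x,s)|\le C_j(1+s)^{C_j}e^{C_j\Phi(\log(x_1x_2),s)}$ for all $x$, $s\le t$ and $j<l$, and using the identity above together with the conversion $\partial_{z_1}=\tfrac12(x_1\partial_{x_1}+x_2\partial_{x_2})$ (with $x_1\le C$, $x_2\le Ce^{\Phi/2}$ on the support), one first gets $|\partial_{z_1}^{k}\Phi(z_1,t)|\le C(1+t)^{C}e^{C\Phi(z_1,t)}$ for $k\le l$, each such factor moreover carrying the extra $x_1x_2$--gain described above. Then one differentiates $\omega(x,t)=\omega_0(\Psi_t x)$ $l$ times by the Fa\`a di Bruno formula: every term is a derivative of $\omega_0$ at $\Psi_t x$ (bounded by $\|\omega_0\|_{C^l}$) times a product of derivatives, of total order $\le l$, of the two components $x_1e^{\Phi/2}$, $x_2e^{-\Phi/2}$; expanding those by Leibniz and the chain rule produces only factors $e^{\pm\Phi/2}$, powers $x_1^{-a}x_2^{-b}$ with $a,b\le l$ (from $\partial_x$ hitting $\log(x_1x_2)$), and derivatives $\partial_{z_1}^{k}\Phi$ --- and in every term each $x_2^{-b}$ is accompanied by $b$ chain--rule links, i.e. by a product of $\partial_{z_1}^{k}\Phi$'s whose $x_1x_2$--gains combine to cancel $x_2^{-b}$ up to a harmless power of $x_1\le C$. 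Collecting the powers of $e^{\Phi}$ (bounded by $e^{CL(t)}$) and of $(1+t)$ gives $|D^l\omega(x,t)|\le C(1+t)^{C}e^{C\Phi(\log(x_1x_2),t)}\le Ce^{Ce^{Ct}}$, closing the induction; the base case $l=1$ is the short direct computation already implicit above.

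The main obstacle is precisely this last point: controlling the behaviour near the segment $\{x_2=0\}$, which $\mathrm{supp}\,\omega_0$ may touch (while $x_1$ stays bounded below, $x_2 e^{-\Phi/2}$ need not). There $z_1=\log(x_1x_2)$ is a singular coordinate and $\partial_{x_2}^{b}$ applied to functions of $z_1$ brings out $x_2^{-b}$; one must verify that the combinatorially precise combination of $\partial_{z_1}^{k}\Phi$'s multiplying it removes the singularity --- equivalently, that for $x_1x_2$ small $\Phi(\log(x_1x_2),t)$ is a smooth function of $x_1x_2$, with Taylor coefficients controlled by $e^{C\Phi}$, which follows from the $x_1x_2$--gain above applied recursively. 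Everything else --- the Fa\`a di Bruno bookkeeping and the absorption of polynomial--in--$t$ factors --- is routine; and if one only wants the lemma for $\omega_0$ with support bounded away from $\{x_2=0\}$, then $x_2^{-1}\le Ce^{-\Phi/2}\le C$ on the time--$t$ support and this subtlety disappears altogether.
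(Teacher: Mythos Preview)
Your approach is correct and shares the paper's core idea: differentiate the explicit formula $\omega(x,t)=\omega_0(\Psi_t x)$ and control the derivatives of the flow map via bounds on derivatives of $\Phi$, together with $\Phi(-\infty,t)\le Ce^{Ct}$ from \eqref{Lexpcon1}. The paper's presentation is shorter because it sidesteps the $z$-coordinate singularity you identify near $x_2=0$: instead of computing $\partial_{z_1}^k\Phi$ and then arguing that an $x_1x_2$-gain from the $1/\cosh z_2$ weight cancels the $x_2^{-b}$ produced by the chain rule, the paper views $\Phi$ as $2\int_0^t\Omega(\eta,s)\,ds$ with $\eta=x_1x_2$ and invokes the estimate $|\partial_\eta\Omega(\eta,t)|\le C\|\omega(\cdot,t)\|_{L^\infty}\delta(t)^{-2}$ from the proof of Lemma~\ref{bkm1}, together with its inductive higher-order analogues. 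Since $\eta$ is a regular variable near $x_2=0$ and $\Omega$ is smooth in $\eta$ down to $\eta=0$, no singularity ever materializes, and the Fa\`a di Bruno bookkeeping you carry out collapses to the paper's one-line ``direct differentiation and estimates similar to the ones described in the proof of Lemma~\ref{bkm1}.'' Your closing remark---that one really needs $\Phi(\log(x_1x_2),t)$ to be smooth as a function of $x_1x_2$---is precisely this observation, so the two arguments are equivalent; the paper's route simply avoids the detour through $z$-coordinates.
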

\begin{proof}
Recall our definition $\delta_f = {\rm min}_{x \in {\rm supp}(f)} x_1,$ and let us use a short cut $\delta(t) = \delta_{\omega(\cdot, t)}.$
Without loss of generality, to simplify the computations, we will assume $\delta(0)<1.$
When $\rho=0,$ \eqref{vortx} transforms into
\[ \omega(x,t) =  \omega_0\left(x_1 e^{\frac12 \Phi(\log(x_1x_2),t)},
x_2 e^{-\frac12 \Phi(\log(x_1x_2),t)}\right). \]
Therefore,
\begin{equation}\label{deltaest} \delta(t) \leq \delta(0)e^{-\frac12 \Phi(-\infty,t)}. \end{equation}
On the other hand, \eqref{Lexpcon1} and conservation of the $L^\infty$ norm of vorticity imply that
\begin{equation}\label{phidest} \Phi(-\infty,t) \leq C e^{Ct}. \end{equation}
The bounds on the derivatives of $\omega(x,t)$ now follow by direct differentiation and estimates similar to the ones described in the proof of
Lemma~\ref{bkm1}.
\end{proof}
Lemma~\ref{doublexp} is in close parallel to the corresponding result for the
classical 2D Euler equation. Our next goal is to prove a sharper upper bound which is just exponential in time (it will not be hard to see that it is in fact optimal).

Consider first the degenerate case where $\omega_0 \equiv 0$ on the boundary of the quadrant $\partial D.$ It will be easier for us to work
in the $x$ coordinate representation.

Recall the representation \eqref{phixcord}, and note that since $\omega(x,t) \geq 0$ in $D$, we have
$\Omega(0,t) = {\rm max}_{\eta \geq 0} \Omega (\eta,t).$
Since $D$ is closed and $\omega_0 \in C^1(D),$ we have $\omega_0(x_1,x_2) \leq C {\rm min}(1,x_2).$  Therefore
\begin{eqnarray*} \Omega(0,t) = \int_D \frac{\omega_0\left(x_1 e^{\frac12 \Phi(\log(x_1x_2),t)}, 
x_2 e^{-\frac12 \Phi(\log(x_1x_2),t)}\right)}{x_1^2 + x_2^2}\,dx \leq \\
\int_D \frac{C{\rm min}(1,x_2)}{x_1^2+x_2^2}\,dx \leq C \int_0^1 dx_2 \int_0^C \frac{x_2}{x_1^2 + x_2^2}\,dx_1
+ C \int_1^\infty dx_2 \int_0^{C} \frac{1}{x_1^2+x_2^2}\,dx_1 \leq C,
\end{eqnarray*}
where the constant depends only on the initial data. By the first inequality in \eqref{upperdoubleexp} and by \eqref{phixcord}, the bound
\eqref{expupper} follows.

Let us now consider the case where $\omega_0(x)$ does not identically vanish on $\partial D.$ As is clear from the preceding paragraph,
to prove Theorem~\ref{hypexp}, it suffices to show global uniform bound $\Omega(0,t) \leq A < \infty$ for all $t.$ This is exactly what we will
do. Note that this bound does not follow from the global $L^\infty$ control of $\omega(x,t);$ it is easy to see that the integral defining
$\Omega(0,t)$ diverges as a logarithm when the support of $\omega(x,t)$ approaches the origin. The uniform bound on $\Omega(0,t)$ will be
a consequence of the dynamical properties of the model.
The proof is not straightforward, and some of the auxiliary results that we develop here will also be useful for the proof of finite time
blow up in the next section. It will be convenient for us to work in the $z$ coordinates. Note that when $\rho \equiv 0,$
\eqref{omsol} transforms into
\begin{equation}\label{omsol2de}
\omega(z,t) = \omega_0\left(z_1, z_2 - \Phi(z_1,t)\right).
\end{equation}
Without loss of generality, we will assume in this section that $\|\omega_0\|_{L^\infty}=1$. The general case can be reduced to it by a simple change of time variable.

Let us define \[ Z = {\rm sup} \{ z_1 \left| \,\,\exists z_2: (z_1,z_2) \in {\rm supp} \omega_0. \right. \} \]
\begin{lemma}\label{suppinf}
Suppose that $0 \leq \omega_0 \in K_n,$ $n \geq 1.$ Then
for every $z_1 < Z,$ we have
\[ \Phi(z_1,t) \stackrel{t \rightarrow \infty}{\longrightarrow} \infty. \]
\end{lemma}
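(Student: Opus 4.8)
The plan is to argue by contradiction, exploiting monotonicity of $\Phi$ and the explicit formula \eqref{omsol2de}. Since $\omega\ge 0$, formula \eqref{tildeOm} shows $\Omega(z_1,t)\ge 0$, so $t\mapsto\Phi(z_1,t)$ is nondecreasing and $z_1\mapsto\Phi(z_1,t)$ is nonincreasing. In particular the limit $\Phi(z_1,\infty):=\lim_{t\to\infty}\Phi(z_1,t)\in[0,\infty]$ exists and is nonincreasing in $z_1$. Suppose the conclusion fails, i.e.\ $\Phi(z_1^*,\infty)=L<\infty$ for some $z_1^*<Z$; then $0\le\Phi(z_1,t)\le L$ for all $t\ge0$ and all $z_1\ge z_1^*$.

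The next step is to pin down a fixed bump of vorticity lying to the right of $z_1^*$ which, by the bound just obtained, cannot escape to infinity in the $z_2$ direction. Because $Z$ is the supremum of first coordinates of points of $\supp\omega_0=\overline{\{\omega_0>0\}}$ and $z_1^*<Z$, there is a point $(a,b)$ with $\omega_0(a,b)>0$ and $a>z_1^*$; by continuity there are $\delta>0$ and $r>0$ with $a-r>z_1^*$ and $\omega_0\ge\delta$ on the disc $B((a,b),r)$. Using \eqref{omsol2de}, $\omega(z_1,z_2,t)=\omega_0(z_1,z_2-\Phi(z_1,t))$, so for each $t$ and each $z_1$ with $|z_1-a|<r/2$ we have $\omega(z_1,z_2,t)\ge\delta$ for every $z_2$ in the interval $I(z_1,t)=\Phi(z_1,t)+\bigl(b-s(z_1),\,b+s(z_1)\bigr)$ with $s(z_1)=\sqrt{r^2-(z_1-a)^2}>r\sqrt3/2$, so $|I(z_1,t)|\ge r\sqrt3$. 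Since such $z_1$ satisfy $z_1>a-r/2>z_1^*$ we have $0\le\Phi(z_1,t)\le L$, hence $I(z_1,t)\subset[\,b-r,\ b+r+L\,]=:J$, a bounded interval independent of $t$.

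Finally I would insert this lower bound into \eqref{tildeOm}. On $J$ we have $1/\cosh z_2\ge c_0:=1/\cosh(|b|+r+L)>0$, so for every $t\ge0$
\[
\Omega(z_1^*,t)=\frac14\int_{z_1^*}^\infty d\tilde z_1\int_{\R}\frac{\omega(\tilde z_1,z_2,t)}{\cosh z_2}\,dz_2
\ \ge\ \frac14\int_{a-r/2}^{a+r/2}d\tilde z_1\int_{I(\tilde z_1,t)}\frac{\delta}{\cosh z_2}\,dz_2
\ \ge\ \frac{\sqrt3}{4}\,r^2\delta c_0\ =:\ c_1>0 ,
\]
a positive constant independent of $t$. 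Then $\Phi(z_1^*,t)=2\int_0^t\Omega(z_1^*,s)\,ds\ge 2c_1 t\to\infty$, contradicting $\Phi(z_1^*,t)\le L$. Therefore $\Phi(z_1,\infty)=\infty$ for every $z_1<Z$, which is the claim. The only subtle point is the second step: producing a time-independent bump of vorticity whose $z_2$-support stays in a bounded window for all time — this is precisely where the (false) boundedness hypothesis on $\Phi$ over $\{z_1\ge z_1^*\}$ and the monotonicity of $\Phi$ in both variables are used; the rest is a direct estimate with the explicit solution and the kernel $1/\cosh z_2$.
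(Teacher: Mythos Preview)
Your proof is correct and follows essentially the same contradiction argument as the paper: use monotonicity of $\Phi$ in $z_1$ to propagate the boundedness assumption to all $z_1\ge z_1^*$, then invoke the explicit solution \eqref{omsol2de} to show the inner integral in $\Omega(z_1^*,t)$ is bounded below by a positive constant for all $t$, yielding $\Phi(z_1^*,t)\to\infty$. Your execution is in fact tidier than the paper's --- you localize to a single bump of $\omega_0$ rather than asserting a pointwise lower bound for every $z_1\in[\bar z_1,Z]$, which avoids the (minor) imprecision in the paper's claimed inequality $\int_{\R}\omega(z_1,z_2,t)/\cosh z_2\,dz_2\ge C(\omega_0)e^{-|z_1|-A}$.
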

\begin{proof}
Suppose not: there exists $\bar z_1 <Z$ such that $\Phi(\bar z_1,t) \leq A < \infty$ for every $t.$
Then the same is true for every $z_1$ satisfying $\bar z_1 \leq z_1 \leq Z,$ since $\Phi(z_1,t) = \int_0^t \Omega(z_1, s)\,ds,$
and $\Omega(z_1,s)$ is monotone decaying in $z_1$ due to positivity of $\omega$. But then for every such $z_1$ and for all $t$ we have
\[ \int_{\R} \frac{\omega(z_1,z_2, t)}{\cosh z_2} \,dz_2 = \int_{\R} \frac{\omega_0(z_1-\Phi(z_1,t),z_2)}{\cosh z_2} \,dz_2 \geq C(\omega_0)e^{-|z_1|-A}>0. \]
This implies that $\Omega(\bar z_1, t) \geq C(\omega_0)\int_{\bar z_1}^Z e^{-|z_1|-A}\,dz_1 >0$ for every $t.$ But this bound contradicts our assumption on $\Phi(\bar z_1,t).$
\end{proof}

\begin{lemma}\label{constdrive}
Suppose that $\omega_0 \in K_n,$ $n \geq 1,$ $\omega_0 \geq 0$ and $\omega_0$ does not identically vanish on $\partial D.$
Then there exists $Z_1$ such that for every $z_1 \leq Z_1,$ we have
\[ \int_{\R} \omega_0(z_1,z_2)\,dz_2 \geq c(\omega_0) >0.  \]
\end{lemma}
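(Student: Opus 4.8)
The plan is to unwind the $z$-coordinate statement back into the $x$-coordinates, where the hypothesis becomes transparent; the argument is the exact analogue for $\omega_0$ of the remark about $\tilde\rho_0$ made right after \eqref{zsupp}. First, since $\omega_0\in K_n$ has $\delta_{\omega_0}>0$, it vanishes identically near the portion $\{x_1=0\}$ of $\partial D$, so the assumption that $\omega_0$ does not identically vanish on $\partial D$ forces $\omega_0(a,0)>0$ at some point $(a,0)$ of the $x_1$-axis, with $\delta_{\omega_0}\le a\le C$ by compactness of $\supp\omega_0$. Since $D$ is closed and $\omega_0\in C^n(D)$, continuity of $\omega_0$ at $(a,0)$ yields $r\in(0,a)$ and $c_0>0$ such that $\omega_0\ge c_0$ on the box $B=\{x\in D:\ |x_1-a|\le r,\ 0\le x_2\le r\}$.

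Next I would transport $B$ through the change of variables $x_1=e^{(z_1-z_2)/2}$, $x_2=e^{(z_1+z_2)/2}$. On a vertical slice $\{z_1=\mathrm{const}\}$, the condition $|x_1-a|\le r$ confines $z_2$ to the interval $J(z_1)=[\,z_1-2\log(a+r),\ z_1-2\log(a-r)\,]$, whose length equals the fixed positive number $2\ell$, $\ell=\log\frac{a+r}{a-r}$, independently of $z_1$; the remaining condition $0\le x_2\le r$ amounts to $z_1+z_2\le 2\log r$, which holds throughout $J(z_1)$ as soon as $z_1\le Z_1:=\log\!\big(r(a-r)\big)$ — a fixed constant depending only on $\omega_0$. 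Hence for $z_1\le Z_1$ the whole segment $J(z_1)$ lies in the $z$-image of $B$.

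It then follows that, for every $z_1\le Z_1$,
\[
\int_{\R}\omega_0(z_1,z_2)\,dz_2\ \ge\ \int_{J(z_1)}c_0\,dz_2\ =\ 2c_0\ell\ =:\ c(\omega_0)\ >\ 0,
\]
as claimed. The only step needing a little care — and the only possible ``obstacle'' — is the coordinate bookkeeping: one must check that the slice $\{z_1=\mathrm{const}\}$ meets $B$ in an interval whose length does not shrink as $z_1\to-\infty$. This is precisely why the logarithmic change of variables, rather than the height constraint $x_2\le r$, is the binding one and dictates the threshold $Z_1$; beyond this, the proof is entirely elementary.
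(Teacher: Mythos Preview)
Your proof is correct and follows essentially the same route as the paper: both unwind to $x$-coordinates and exploit that $\omega_0$ is positive near a point of the $x_1$-axis, so the hyperbolas $x_1x_2=e^{z_1}$ pass through a region of definite mass as $z_1\to-\infty$. The paper does this in one line via the change-of-variables identity $\int_\R\omega_0(z_1,z_2)\,dz_2=2\int_0^\infty\omega_0(x_1,e^{z_1}/x_1)\,\tfrac{dx_1}{x_1}$, whereas you spell out the same idea more explicitly with the box $B$ and the interval $J(z_1)$.
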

\begin{proof}
Direct calculation shows that
\[ \int_{\R} \omega_0(z_1,z_2)\,dz_2 = 2\int_0^\infty \omega_0\left(x_1, \frac{e^{z_1}}{x_1}\right) \,\frac{dx_1}{x_1}, \]
where we switched to $\omega_0$ in the $x$ coordinates in the second integral. The integral on the right hand side is bounded
from below uniformly by a positive constant for all $z_1$ sufficiently small, due to the assumption that $\omega_0 \in C^1$ and
does not identically vanish on $\partial D.$
\end{proof}

Let us denote $H(z_1,t) = z_1 + \Phi(z_1,t).$
Due to \eqref{zsupp} and \eqref{omsol2de}, we see that
\[ {\rm supp} \,\omega(z,t) \subset \{ (z_1,z_2): \,\, H(z_1,t)-K \leq z_2 \leq H(z_1,t)+K \}, \] 
so that $H$ describes how close, for a fixed $z_1$ and $t$, is the support of the solution from the maximum of the weight $1/\cosh z_2.$

Define the forward front, $F_1(t),$ by
\begin{equation}\label{forfro}
F_1(t) = {\rm min} \{ z_1 \left| \,\,\, H(z_1, t) =-B  \right. \},
\end{equation}
where $B \geq 1$ is a sufficiently large constant that will be chosen below.
Note that due to our assumption on the initial data
\eqref{zsupp}, and \eqref{phidest}, we have that $H(z_1,t) \rightarrow -\infty$ as $z_1 \rightarrow -\infty$ for all $t.$
Then due to Lemma~\ref{suppinf}, $F_1(t)$ is well defined for all times $t \geq t_0.$
Without loss of generality, we can choose $t_0$ large enough so that $H(Z_1,t) \geq 0$ (and so in particular $F_1(t) \leq Z_1$)
for all $t \geq t_0$.
The time $t_0$ depends only on $\omega_0,$ and will be fixed throughout the argument of this section.

\begin{lemma}\label{Omlb}
For every $z_1 \leq Z_1$ such that $H(z_1,t) \leq -B,$ for every $t \geq t_0,$ we have $\Omega(z_1,t) \geq \gamma >0,$ where the
constant $\gamma$ only depends on $\omega_0.$
\end{lemma}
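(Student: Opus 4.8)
\emph{Proof plan.} The plan is to bound $\Omega(z_1,t)$ from below by isolating, for the given $z_1$ and $t$, a short interval $I\subset[z_1,Z_1]$ of values $\tilde z_1$ along which the slice $\omega(\tilde z_1,\cdot,t)$ simultaneously (i) carries $z_2$-mass bounded below by a fixed constant, via Lemma~\ref{constdrive}, and (ii) is supported within a fixed distance of the line $z_2=0$, where the weight $1/\cosh z_2$ appearing in \eqref{tildeOm} is of order one. Restricting the $\tilde z_1$-integral in \eqref{tildeOm} to $I$ then yields the claim. Concretely, the first thing I would record is the single-fiber estimate: by \eqref{omsol2de} the $z_2$-integral of $\omega$ along each fiber is conserved, so $\int_{\R}\omega(\tilde z_1,z_2,t)\,dz_2=\int_{\R}\omega_0(\tilde z_1,z_2)\,dz_2\ge c(\omega_0)>0$ for every $\tilde z_1\le Z_1$ by Lemma~\ref{constdrive}; since $\mathrm{supp}\,\omega(\tilde z_1,\cdot,t)\subset\{z_2:\,H(\tilde z_1,t)-K\le z_2\le H(\tilde z_1,t)+K\}$, on that support $\cosh z_2\le e^{|H(\tilde z_1,t)|+K}$. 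Discarding the tail $\tilde z_1>Z_1$ in \eqref{tildeOm} by positivity of $\omega$ (recall $\rho\equiv0$ here), this gives
\[
\Omega(z_1,t)\ \ge\ \frac{c(\omega_0)}{4}\,e^{-K}\int_{z_1}^{Z_1}e^{-|H(\tilde z_1,t)|}\,d\tilde z_1 .
\]

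The heart of the matter is then to bound $\int_{z_1}^{Z_1}e^{-|H(\tilde z_1,t)|}\,d\tilde z_1$ below by an absolute constant. Positivity of $\omega$ makes $\Omega(\cdot,s)$, hence $\Phi(\cdot,t)$, non-increasing in $z_1$, so $H$ is one-sided Lipschitz in $z_1$ with constant one: $H(\tilde z_1',t)-H(\tilde z_1,t)\le \tilde z_1'-\tilde z_1$ whenever $\tilde z_1\le\tilde z_1'$. By hypothesis $H(z_1,t)\le -B$ (with $B\ge1$ as in \eqref{forfro}), while by the choice of $t_0$ we have $H(Z_1,t)\ge0$; by continuity, let $\bar z_1$ be the smallest point of $[z_1,Z_1]$ with $H(\bar z_1,t)=0$, so $H<0$ on $[z_1,\bar z_1)$. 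The one-sided Lipschitz bound applied from $z_1$ to $\bar z_1$ gives $\bar z_1-z_1\ge -H(z_1,t)\ge B\ge1$, hence $[\bar z_1-1,\bar z_1]\subset[z_1,Z_1]$, and for $\tilde z_1$ in this interval $-1\le -(\bar z_1-\tilde z_1)\le H(\tilde z_1,t)\le0$, i.e. $|H(\tilde z_1,t)|\le1$. Therefore $\int_{z_1}^{Z_1}e^{-|H(\tilde z_1,t)|}\,d\tilde z_1\ge e^{-1}$, and combining with the display above, $\Omega(z_1,t)\ge\frac14 c(\omega_0)e^{-K-1}=:\gamma>0$, which depends only on $\omega_0$ (through $c(\omega_0)$ and $K$) and is uniform in $t\ge t_0$ and in all admissible $z_1$.

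I expect the only genuine obstacle to be the second step: converting the qualitative fact that $H$ climbs from $\le -B$ through $0$ somewhere on $[z_1,Z_1]$ into a quantitative interval of unit length on which $|H|=O(1)$, so that the exponential weight contributes a definite amount. This rests entirely on the one-sided Lipschitz property $H(\tilde z_1',t)-H(\tilde z_1,t)\le \tilde z_1'-\tilde z_1$, which in turn is a direct consequence of the sign-definiteness of $\omega$ (monotonicity of $\Omega$ in $z_1$); without positivity one loses control of the rate at which the ``height'' $H$ of the support can rise toward $z_2=0$, and the argument would break. The remaining ingredients are routine manipulations of the explicit formula \eqref{omsol2de} and of the weight $1/\cosh z_2$.
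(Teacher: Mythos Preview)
Your proof is correct and follows essentially the same approach as the paper. Both arguments hinge on the one-sided Lipschitz bound $\partial_{z_1}H\le 1$ (your ``$H(\tilde z_1',t)-H(\tilde z_1,t)\le \tilde z_1'-\tilde z_1$''), combined with the endpoint conditions $H(z_1,t)\le -B\le -1$ and $H(Z_1,t)\ge 0$, to locate a unit-length set of $\tilde z_1$'s on which $|H(\tilde z_1,t)|\le 1$; then Lemma~\ref{constdrive} and the support bound from \eqref{zsupp} give the identical constant $\gamma=\tfrac14\,c(\omega_0)e^{-K-1}$. The paper names this set $S=\{\tilde z_1\in[z_1,Z_1]:|H(\tilde z_1,t)|\le 1\}$ and asserts $|S|\ge 1$, whereas you explicitly construct $[\bar z_1-1,\bar z_1]\subset S$ via the first zero of $H$; this is a purely presentational difference.
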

\begin{proof}
Observe that
\begin{equation}\label{partzD}
\partial_{z_1} H(z_1,t) = 1 + 2 \int_0^t \partial_{z_1}\Omega(z_1,s)\,ds \leq 1,
\end{equation}
since $\partial_{z_1}\Omega(z_1,t) \leq 0$ for all $z_1$ and $t.$ Let us denote $S$ the set of all $\tilde{z}_1 \in [z_1, Z_1]$
such that $|H(\tilde{z}_1,t)| \leq 1.$ Due to $H(z_1,t) \leq -B \leq -1,$ $H(Z_1,t) \geq 0,$ and \eqref{partzD}, it is
straightforward to see that $|S| \geq 1.$ Then, by \eqref{omsol2de},
\[ \Omega(z_1,t) \geq \frac14 \int_S d\tilde{z}_1 \int_{\R} dz_2 \frac{\omega_0(\tilde{z}_1, z_2 + \tilde{z}_1 -  H(\tilde{z}_1,t))}{\cosh z_2} \geq
\frac14 |S| c(\omega_0) e^{-K-1} \equiv \gamma, \]
where $c(\omega_0)$ is the constant from Lemma~\ref{constdrive}, and $K$ is the constant from \eqref{zsupp}.
Note that $\gamma$ is independent of $B.$
\end{proof}

The next proposition describes the structure of $H(z_1,t)$ for $z_1 \leq F_1(t).$
\begin{proposition}\label{frontstr}
For every $z_1 \leq F_1(t),$ we have
$\partial_{z_1}H(z_1,t) \geq 1 - (\gamma^{-1}+t_0)e^{K-B}.$ In particular, if we choose $B$ large enough so that $B \geq K,$ $B \geq 1,$ and $\epsilon \equiv (\gamma^{-1}+t_0)e^{K-B} < 1,$
then
\begin{equation}\label{pastfront}
1 \geq \partial_{z_1} H(z_1,t) \geq 1 -\epsilon >0
\end{equation}
for all $t \geq t_0,$ $z_1 \leq F_1(t).$
\end{proposition}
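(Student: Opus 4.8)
The plan is to bound $\partial_{z_1}H(z_1,t)=1+2\int_0^t\partial_{z_1}\Omega(z_1,s)\,ds$ from below. By \eqref{partzD} the integral term is nonpositive, so it suffices to show $2\int_0^t|\partial_{z_1}\Omega(z_1,s)|\,ds\le(\gamma^{-1}+t_0)e^{K-B}$ for $z_1\le F_1(t)$. A direct computation from \eqref{tildeOm} gives $\partial_{z_1}\Omega(z_1,s)=-\tfrac14\int_{\R}\frac{\omega(z_1,z_2,s)}{\cosh z_2}\,dz_2$; by \eqref{omsol2de} and the support condition \eqref{zsupp}, the integrand is supported in $z_2\in[H(z_1,s)-K,H(z_1,s)+K]$, and since $\|\omega_0\|_{L^\infty}=1$ we obtain $|\partial_{z_1}\Omega(z_1,s)|\le\tfrac14\int_{H(z_1,s)-K}^{H(z_1,s)+K}\frac{dz_2}{\cosh z_2}$.

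Next I would record two elementary geometric facts valid for $z_1\le F_1(t)$. Since $H(\cdot,t)$ is continuous, $H(z_1,t)\to-\infty$ as $z_1\to-\infty$, and $F_1(t)$ is the smallest zero of $H(\cdot,t)+B$, the intermediate value theorem forces $H(z_1,t)\le-B$ for every $z_1\le F_1(t)$. Moreover $\partial_sH(z_1,s)=2\Omega(z_1,s)\ge0$ by positivity of $\omega$, so $s\mapsto H(z_1,s)$ is nondecreasing and hence $H(z_1,s)\le H(z_1,t)\le-B$ for all $s\le t$. Once $B\ge K$, this places the support interval inside $\{z_2\le0\}$, where $\tfrac{1}{\cosh z_2}\le2e^{z_2}$; integrating yields the pointwise bound $|\partial_{z_1}\Omega(z_1,s)|\le\tfrac12 e^{H(z_1,s)+K}$ for all $s\le t$.

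The key step, and the one I expect to require the most care, is turning $\int_0^t e^{H(z_1,s)+K}\,ds$ into an estimate that does not grow with $t$ — the naive pointwise bound only gives $O(t)$, which is useless. I split the integral at $t_0$. On $[0,t_0]$ I use $H(z_1,s)\le-B$ directly, getting $\int_0^{t_0}\tfrac12 e^{H(z_1,s)+K}\,ds\le\tfrac12 t_0e^{K-B}$. On $[t_0,t]$ I invoke Lemma~\ref{Omlb}: since $z_1\le F_1(t)\le Z_1$ (for $t\ge t_0$) and $H(z_1,s)\le-B$, we have $\Omega(z_1,s)\ge\gamma$, so $s\mapsto H(z_1,s)$ is a strictly increasing $C^1$ change of variable with $dH=2\Omega(z_1,s)\,ds\ge2\gamma\,ds$ and range contained in $(-\infty,-B]$; hence $\int_{t_0}^t\tfrac12 e^{H(z_1,s)+K}\,ds\le\frac{1}{4\gamma}\int_{-\infty}^{-B}e^{h+K}\,dh=\frac{1}{4\gamma}e^{K-B}$. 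Adding the two contributions and multiplying by $2$ gives $2\int_0^t|\partial_{z_1}\Omega(z_1,s)|\,ds\le(t_0+\tfrac{1}{2\gamma})e^{K-B}\le(\gamma^{-1}+t_0)e^{K-B}$, which is exactly the asserted lower bound for $\partial_{z_1}H(z_1,t)$.

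Finally, for the ``in particular'' statement: $\gamma$, $t_0$ and $K$ depend only on $\omega_0$, and $\gamma$ is independent of $B$ by Lemma~\ref{Omlb}, so we may fix $B\ge\max(K,1)$ large enough that $\epsilon\equiv(\gamma^{-1}+t_0)e^{K-B}<1$. Then the bound just proved, together with $\partial_{z_1}H(z_1,t)\le1$ from \eqref{partzD}, gives \eqref{pastfront}. Apart from the change of variables powered by the uniform lower bound $\Omega\ge\gamma$ of Lemma~\ref{Omlb}, every step is a routine estimate.
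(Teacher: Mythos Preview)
Your proof is correct and follows essentially the same approach as the paper's: both compute $\partial_{z_1}H$ explicitly, use the support constraint \eqref{zsupp} together with $H(z_1,s)\le -B$ for all $s\le t$, split the time integral at $t_0$, and on $[t_0,t]$ exploit the lower bound $\Omega\ge\gamma$ from Lemma~\ref{Omlb} to kill the large-$t$ growth. The only cosmetic difference is that the paper records the linear bound $H(z_1,s)\le -B-2\gamma(t-s)$ and integrates in $s$, whereas you phrase the same step as a change of variable $h=H(z_1,s)$; your justification that $z_1\le F_1(t)$ forces $H(z_1,t)\le -B$ (via the intermediate value theorem and minimality of $F_1$) is in fact more explicit than the paper's.
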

\begin{proof}
Observe that
\[ \partial_{z_1}H(z_1,t) = 1 - \frac12 \int_0^t ds \int_{\R} dz_2 \frac{\omega_0(z_1, z_2+z_1-H(z_1,s))}{\cosh z_2}, \]
so we need to estimate the integral on the right hand side. Since $z_1 \leq F_1(t),$ we have $H(z_1,s) \leq -B$
for all $t_0 \leq s \leq t.$ Therefore, by Lemma~\ref{Omlb}, we have $\Omega(z_1,s) \geq \gamma$ for every $t_0 \leq s \leq t.$
It follows that $H(z_1,s) \leq -B - 2\gamma(t-s)$ for all $t_0 \leq s \leq t.$ Using \eqref{zsupp} and $\|\omega_0\|_{L^\infty}=1,$ we can estimate
\begin{eqnarray*}
 \int\limits_0^t ds \int\limits_{\R} dz_2 \frac{\omega_0(z_1, z_2+z_1-H(z_1,s))}{\cosh z_2} \leq
\int\limits_0^{t_0} ds \int\limits_{\R} dz_2 \frac{\omega_0(z_1, z_2+z_1-H(z_1,s))}{\cosh z_2} + \\ \int\limits_{t_0}^t ds \int\limits_{\R} dz_2 \frac{\omega_0(z_1, z_2+z_1-H(z_1,s))}{\cosh z_2} \leq
2t_0 e^{K-B} + \int\limits_{t_0}^t ds \int\limits_{-\infty}^{K-B-2\gamma(t-s)} 2e^{z_2} \,dz_2 \leq (2t_0 + \gamma^{-1})e^{K-B}.
\end{eqnarray*}
\end{proof}
For the rest of this section, we will choose $B$ so that $\epsilon \equiv (\gamma^{-1}+2t_0)e^{K-B} \leq 0.1$.

Observe that by Proposition~\ref{frontstr}, with our choice of $B,$ the function $H(z_1,t)$ is strictly increasing in $z_1$ in the $z_1 \leq F_1(t)$ region.

Note also that Proposition~\ref{frontstr} implies that $F_1(t)$ is continuous in time. Indeed, a jump in $F_1(t)$ would only be possible if $D(z_1,t)$
were not strictly monotone for $z_1 \leq F_1(t).$

In fact, the proof of Proposition~\ref{frontstr} yields the following stronger statement.
\begin{corollary}\label{minB}
Suppose that $H(z_1,t) \leq -B$ for some $t \geq t_0.$ Then $\partial_{z_1} H(z_1, t) \geq 1- \epsilon.$ As a consequence, the function $H(z_1,t)$
is one-to-one in pre-image of $(-\infty,-B],$ and this pre-image equals $(-\infty, F_1(t)].$ In particular, $H(z_1,t) > -B$ for every $z_1 > F_1(t).$
\end{corollary}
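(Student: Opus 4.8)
The plan is to mine the proof of Proposition~\ref{frontstr} and extract the local character of the estimate it produces. The key observation is that the bound $\partial_{z_1}H(z_1,t)\ge 1-(\gamma^{-1}+t_0)e^{K-B}$ was derived using only the single hypothesis that $H(z_1,s)\le -B$ for all $s\in[t_0,t]$ — once that inequality holds at time $t$ for a given $z_1$, Lemma~\ref{Omlb} gives $\Omega(z_1,s)\ge\gamma$ for $s\in[t_0,t]$ whenever $H(z_1,s)\le -B$, and the same integral estimate goes through. So the first step is to show that if $H(z_1,t)\le -B$ for some $t\ge t_0$, then in fact $H(z_1,s)\le -B$ for \emph{all} $t_0\le s\le t$: this is a backward-in-time propagation. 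To see it, note $\partial_s H(z_1,s)=2\Omega(z_1,s)\ge 0$ (positivity of $\omega$), so $s\mapsto H(z_1,s)$ is non-decreasing; hence $H(z_1,t)\le -B$ forces $H(z_1,s)\le H(z_1,t)\le -B$ for every $s\le t$. That immediately supplies the hypothesis needed to rerun the computation of Proposition~\ref{frontstr} verbatim, giving $\partial_{z_1}H(z_1,t)\ge 1-\epsilon$ at that point.

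Next I would deduce the structural consequences. Fix $t\ge t_0$ and let $E_t=\{z_1:H(z_1,t)\le -B\}$. We have just shown $\partial_{z_1}H(z_1,t)\ge 1-\epsilon>0$ for every $z_1\in E_t$. Since $H(z_1,t)\to -\infty$ as $z_1\to-\infty$ (from \eqref{zsupp} and \eqref{phidest}, as already noted before \eqref{forfro}) and $H(Z_1,t)\ge 0$ with $Z_1\notin E_t$, the set $E_t$ is nonempty and bounded above. The strict monotonicity on $E_t$ shows $E_t$ must be an interval of the form $(-\infty,a]$: if $z_1\in E_t$ and $z_1'<z_1$, then every point of $[z_1',z_1]$ either lies in $E_t$ or, at the first point leaving $E_t$ going left, $H$ would have to be $\ge -B$ there while $H(z_1,t)\le -B$ to its right, contradicting $\partial_{z_1}H>0$ wherever $H\le -B$; a short continuity/connectedness argument makes this precise. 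The right endpoint $a$ is then characterized by $H(a,t)=-B$, and since $H$ is strictly increasing on all of $(-\infty,a]$ we get $H(z_1,t)>-B$ for $z_1>a$ at least locally, hence $a$ is the \emph{unique} solution of $H(\cdot,t)=-B$; comparing with the definition \eqref{forfro} of $F_1(t)$ as the minimal such solution gives $a=F_1(t)$. This yields all three claims: $\partial_{z_1}H(z_1,t)\ge 1-\epsilon$ on the pre-image of $(-\infty,-B]$, that pre-image equals $(-\infty,F_1(t)]$, and $H(z_1,t)>-B$ for every $z_1>F_1(t)$.

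The step I expect to be the main obstacle is the backward-in-time propagation $H(z_1,t)\le -B\Rightarrow H(z_1,s)\le -B$ for $s\le t$ — not because it is deep (it is a one-line monotonicity argument using $\partial_s H=2\Omega\ge 0$), but because one must be slightly careful that $\Omega(z_1,s)\ge 0$ genuinely holds with no sign restriction from the blow-up term, which is fine here since we are in the $\rho\equiv 0$ setting of Section~\ref{2deul} where $\omega(\cdot,t)\ge 0$ is preserved by \eqref{omsol2de}. The only other point needing a touch of care is the claim that $\{z_1:H(z_1,t)\le -B\}$ is a half-line rather than possibly a union of intervals: this is exactly where strict monotonicity \eqref{pastfront} is used, and one should phrase it as ``$H(\cdot,t)$ cannot re-enter $(-\infty,-B]$ after leaving it (going rightward)'' which follows since at any exit point $z_1^\ast$ we have $H(z_1^\ast,t)=-B$ and $\partial_{z_1}H(z_1^\ast,t)\ge 1-\epsilon>0$, so $H$ stays above $-B$ just to the right; an open-closed argument on the connected set then seals it. With these two points handled, everything else is bookkeeping with the already-established Proposition~\ref{frontstr} and definition \eqref{forfro}.
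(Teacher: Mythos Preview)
Your proposal is correct and follows essentially the same approach as the paper, which simply states that the derivative bound comes from rerunning the Proposition~\ref{frontstr} computation and that the remaining claims follow immediately. Your explicit observation that $\partial_s H(z_1,s)=2\Omega(z_1,s)\ge 0$ (so $H(z_1,t)\le -B$ propagates backward in time) is exactly the point that makes the paper's ``identical'' assertion precise, and your re-entry argument for the shape of the preimage is a clean way to unpack what the paper leaves implicit.
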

\begin{proof}
The proof of the bound on $\partial_{z_1}H(z_1,t)$ is identical to that in the proof of Proposition~\ref{frontstr}. The rest of Corollary~\ref{minB}
follows immediately.
\end{proof}

One further consequence of Proposition~\ref{frontstr} is that to control $\Omega(-\infty,t),$ it suffices to estimate $\Omega(F_1(t),t).$
\begin{corollary}\label{Ominfest}
For every $t \geq t_0,$ we have
\[ \Omega(-\infty,t) \leq \Omega(F_1(t),t) + \frac{1}{1-\epsilon} e^{K-B}. \]
\end{corollary}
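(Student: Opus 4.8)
The plan is to split the integral defining $\Omega(-\infty,t)$ into a piece over $z_1 \geq F_1(t)$, which is exactly $\Omega(F_1(t),t)$, and the remaining piece over $z_1 < F_1(t)$, which we must show is bounded by $\frac{1}{1-\epsilon}e^{K-B}$. Concretely, using the $z$-coordinate expression \eqref{tildeOm} together with the solution formula \eqref{omsol2de} and the support condition \eqref{zsupp}, write
\[
\Omega(-\infty,t) = \frac14 \int_{-\infty}^{F_1(t)} d\tilde z_1 \int_{\R} \frac{\omega_0(\tilde z_1, z_2 + \tilde z_1 - H(\tilde z_1,t))}{\cosh z_2}\,dz_2 \;+\; \Omega(F_1(t),t),
\]
since $\Omega(z_1,t)$ as a function of its first argument is precisely the integral of the same integrand over $[\tilde z_1,\infty)$ in the first variable. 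So it remains to bound the first term on the right.

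For the first term I would use that on the support of the integrand we have $z_2 + \tilde z_1 - H(\tilde z_1,t) \leq K$ by \eqref{zsupp}, i.e. $z_2 \leq K - H(\tilde z_1,t) + \ldots$; more precisely, since $\omega_0(\tilde z_1, \cdot)$ is supported (via \eqref{zsupp}) in an interval of the form near the $z_1$-axis, the inner integral is controlled by $\int_{z_2 \geq H(\tilde z_1,t) - K} \frac{dz_2}{\cosh z_2} \cdot \|\omega_0\|_{L^\infty}$. For $\tilde z_1 < F_1(t)$ Corollary~\ref{minB} gives $H(\tilde z_1,t) \leq -B$, and moreover, by the monotonicity $\partial_{z_1}H \geq 1-\epsilon$ from Corollary~\ref{minB} (valid on the pre-image of $(-\infty,-B]$), we get $H(\tilde z_1,t) \leq -B - (1-\epsilon)(F_1(t) - \tilde z_1)$. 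Hence $H(\tilde z_1,t) - K \to -\infty$ linearly as $\tilde z_1 \to -\infty$, and since $\frac{1}{\cosh z_2} \leq 2e^{z_2}$ for $z_2 \leq 0$ (which holds once $B \geq K$, so that $H(\tilde z_1,t) - K \leq -1 < 0$ on this range), we estimate
\[
\int_{z_2 \geq H(\tilde z_1,t)-K} \frac{dz_2}{\cosh z_2} \leq \int_{-\infty}^{\;-H(\tilde z_1,t)+K} 2 e^{-|z_2|}\,dz_2 \leq 2 e^{H(\tilde z_1,t) - K}\cdot(\text{const}),
\]
wait — I should be careful with the direction. Since on the support $z_2 - (H(\tilde z_1,t) - \tilde z_1) = z_2 + \tilde z_1 - H(\tilde z_1,t) + \ldots$ — the cleanest route is to note that for $\tilde z_1 < F_1(t)$ the inner integral, by the same computation as in the proof of Proposition~\ref{frontstr}, is at most $2e^{K + H(\tilde z_1,t)}$ after the substitution, because the relevant $z_2$-range where $\omega_0$ is nonzero forces $\frac{1}{\cosh z_2}$ to be exponentially small, of order $e^{H(\tilde z_1,t) - K}$; I will simply reuse that estimate. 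Then the bound $H(\tilde z_1,t) \leq -B - (1-\epsilon)(F_1(t)-\tilde z_1)$ gives a geometric decay in $F_1(t) - \tilde z_1$, so
\[
\frac14 \int_{-\infty}^{F_1(t)} d\tilde z_1 \cdot 2 e^{K} e^{-B - (1-\epsilon)(F_1(t)-\tilde z_1)} = \frac{1}{2}e^{K-B}\int_0^\infty e^{-(1-\epsilon)s}\,ds = \frac{1}{2(1-\epsilon)}e^{K-B},
\]
which is even a bit stronger than the claimed bound; absorbing the harmless constant factors into the statement gives the result.

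The main obstacle is bookkeeping the constants correctly in the inner $z_2$-integral: one has to track carefully how the support condition \eqref{zsupp} on $\omega_0$ in $(z_1,z_2)$ translates, after the shift $z_2 \mapsto z_2 + z_1 - H(z_1,t)$, into a constraint that makes $\frac{1}{\cosh z_2}$ of size $e^{H(z_1,t) - K}$ rather than $O(1)$, and to make sure the hypotheses $B \geq K$, $B \geq 1$ (already imposed before the corollary) are exactly what is needed for $\frac{1}{\cosh} \le 2e^{-|z_2|}$ to be applied on the correct half-line. Once this estimate is in hand and combined with the geometric decay coming from $\partial_{z_1} H \geq 1 - \epsilon$ on $(-\infty, F_1(t)]$, the rest is a one-line integration. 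I would present the argument by first recording the decomposition, then citing Corollary~\ref{minB} for the monotonicity and the lower bound $H(\tilde z_1,t) \leq -B - (1-\epsilon)(F_1(t) - \tilde z_1)$, then doing the single exponential integral.
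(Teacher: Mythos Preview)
Your approach is correct and essentially identical to the paper's: decompose $\Omega(-\infty,t)-\Omega(F_1(t),t)$ as the $\tilde z_1$-integral over $(-\infty,F_1(t)]$, use the support condition \eqref{zsupp} together with $\|\omega_0\|_{L^\infty}=1$ to bound the inner integral by $\int_{-\infty}^{H(\tilde z_1,t)+K} 2e^{z_2}\,dz_2 = 2e^{K+H(\tilde z_1,t)}$, then invoke the lower bound $\partial_{z_1}H\ge 1-\epsilon$ from Proposition~\ref{frontstr} (or Corollary~\ref{minB}) to get $H(\tilde z_1,t)\le -B-(1-\epsilon)(F_1(t)-\tilde z_1)$ and integrate. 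Your final constant $\tfrac{1}{2(1-\epsilon)}e^{K-B}$ is in fact exactly what the paper obtains (up to an inessential factor of~2 in the paper's own write-up); the only cosmetic issue is that your exposition wavers on the sign of the $z_2$-constraint mid-argument (you write ``of order $e^{H-K}$'' where it should be $e^{H+K}$), but your concluding computation has it right.
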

\begin{proof}
By Proposition~\ref{frontstr}, we have
\begin{eqnarray*}
\Omega(-\infty,t) - \Omega(F_1(t),t) = \frac14 \int\limits_{-\infty}^{F_1(t)} dz_1 \int\limits_{\R} dz_2 \frac{\omega_0(z_1, z_2+z_1-H(z_1,s))}{\cosh z_2} \leq \\
\frac14 \int\limits_{-\infty}^{F_1(t)} dz_1 \int\limits_{-\infty}^{K-B-|z_1-F_1(t)|(1-\epsilon)} e^{z_2} \,dz_2 \leq \frac14 e^{K-B} \int\limits_{-\infty}^{F_1(t)}
e^{-(1-\epsilon)|z_1-F_1(t)|}\,dz_1 \leq \frac{1}{4(1-\epsilon)} e^{K-B}.
\end{eqnarray*}
\end{proof}


Now we are ready to state a key proposition from which Theorem~\ref{hypexp} will follow.

\begin{proposition}\label{key2deprop}
Set $A_0 = 10000(B+K)^2 e^{2(B+K)}.$ Let $t_1$ be any time such that $\Omega(F_1(t_1), t_1)=A_0.$ Then there exists $\delta t = \delta t (B,K) \leq 1$ such that
$\Omega(F_1(t_1+\delta t), t_1+\delta t) \leq A_0/2,$ and $\Omega(-\infty,t) \leq 2A_0 e^{B+K}$ for all $t_1 \leq t \leq t_1 +\delta t.$
\end{proposition}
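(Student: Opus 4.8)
The plan is to establish both conclusions together, working with $\Omega(-\infty,t)=\tfrac14\int_{\R}m_t(z_1)\,dz_1$ rather than $\Omega(F_1(t),t)$, where $m_t(z_1):=\int_{\R}\tfrac{\omega(z_1,z_2,t)}{\cosh z_2}\,dz_2=-4\,\partial_{z_1}\Omega(z_1,t)$; by Corollary~\ref{Ominfest} these two differ by at most $\tfrac1{1-\epsilon}e^{K-B}$, negligible beside $A_0$. First I would write down the evolution law for $\Omega(-\infty,t)$; then extract an a priori bound yielding $\Omega(-\infty,t)\le 2A_0e^{B+K}$ on an interval $[t_1,t_1+\delta t]$ with $\delta t$ allowed to be as large as $\asymp(B+K)/A_0\le 1$; and finally show that the largeness of $\Omega$ at $t_1$ drives it below $A_0/2$ within that interval.

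\emph{Evolution law.} Using \eqref{tildeomega} with $\rho\equiv0$, i.e.\ $\partial_t\omega=-2\Omega\,\partial_{z_2}\omega$, and one integration by parts in $z_2$,
\[
\frac{d}{dt}\Omega(-\infty,t)=-\frac12\int_{\R}\Omega(z_1,t)\Bigl(\int_{\R}\omega(z_1,z_2,t)\,\frac{\sinh z_2}{\cosh^2 z_2}\,dz_2\Bigr)\,dz_1 .
\]
Because $\omega(z_1,\cdot,t)$ is supported in $\{|z_2-H(z_1,t)|\le K\}$ and $\tfrac{\sinh z_2}{\cosh^2 z_2}$ is positive for $z_2>0$, any column with $H(z_1,t)\ge K$ makes a non-positive contribution: shearing the vorticity upward, off the peak of the weight $1/\cosh z_2$ at $z_2=0$, can only decrease $\Omega$. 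The columns that can raise $\Omega$ are those of the ``active band'' $\{z_1:H(z_1,t)\in[-B,K]\}$ and those behind the front $\{H<-B\}$, and by Proposition~\ref{frontstr} the latter contribute only $O(e^{K-B})\,\Omega(-\infty,t)$.

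\emph{A priori bound.} The crude inequality $\bigl|\tfrac{\sinh z_2}{\cosh^2 z_2}\bigr|\le\tfrac1{\cosh z_2}$ gives $\bigl|\int\omega\tfrac{\sinh z_2}{\cosh^2 z_2}dz_2\bigr|\le m_t(z_1)=-4\partial_{z_1}\Omega(z_1,t)$, hence the Riccati bound $\bigl|\tfrac{d}{dt}\Omega(-\infty,t)\bigr|\le 2\!\int\!\Omega(-\partial_{z_1}\Omega)\,dz_1=\Omega(-\infty,t)^2$. This already forces any admissible $\delta t$ to be $\le1$, but it is too lossy to hold $\Omega$ below $2A_0e^{B+K}$ over a window of length $\asymp(B+K)/A_0$. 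To sharpen it I would exploit the geometry of the active band: there $\omega(z_1,\cdot,t)$ lies in $\{|z_2|\le B+2K\}$, so $-\partial_{z_1}\Omega(z_1,t)=\tfrac14 m_t(z_1)\ge\tfrac1{4\cosh(B+2K)}\int\omega_0(z_1,w)\,dw$, and, together with $\partial_{z_1}H\ge1-\epsilon$ on $\{H\le-B\}$ (Proposition~\ref{frontstr}, Corollary~\ref{minB}) and a bootstrap on $\Omega$, this bounds the part of the band carrying non-negligible mass by a length $L(B,K)$. Combining this with the sharper estimate $\int\omega\tfrac{\sinh z_2}{\cosh^2 z_2}dz_2\approx H(z_1,t)\,m_t(z_1)$ near the weight maximum (odd symmetry of the kernel) and its exponential smallness far from it, the increasing part of $\tfrac{d}{dt}\Omega$ is controlled by $C(B,K)\,\Omega(-\infty,t)$ for some $C(B,K)$ depending only on $B,K$ with $C(B,K)\ll A_0$; Gronwall then gives $\Omega(-\infty,t)\le A_0e^{C(B,K)(t-t_1)}\le 2A_0e^{B+K}$ for all $t\in[t_1,t_1+\delta t]$, provided $\delta t\le(B+K)/C(B,K)$, and since $C(B,K)\ll A_0$ this allows $\delta t\ge(B+K)/A_0$.

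\emph{Halving, and the obstacle.} Since $\Omega(-\infty,t_1)=A_0$ is enormous, at $t_1$ a large share of the weighted mass sits essentially at the maximum of the weight, on columns where $\Omega$ itself is large: letting $z_1^{**}$ satisfy $\Omega(z_1^{**},t_1)=A_0/2$, the columns $z_1\le z_1^{**}$ carry half the weighted mass, have $|H(z_1,t_1)|\lesssim B+K$, and have $\Omega(z_1,t_1)\ge A_0/2$. On them $\partial_tH(z_1,t)=2\Omega(z_1,t)$ stays $\gtrsim A_0$ for a time of order $(B+K)/A_0$ (using the a priori bound to keep $\Omega$ from decaying there too fast), so their $H$ increases past $B+K$; thereafter $\tfrac{\sinh z_2}{\cosh^2 z_2}$ and $\tfrac1{\cosh z_2}$ agree up to a factor close to $1$ on their supports, the integrand in the evolution law is non-positive there, and $\tfrac{d}{dt}\Omega\le-c\,\Omega(-\infty,t)^2+(\text{lower order})$ pulls $\Omega$ down by a fixed fraction in a further time of order $1/A_0$. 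The mass uncovered by the receding front during $[t_1,t_1+\delta t]$ occupies an interval of length $O\bigl((B+K)e^{B+K}\bigr)$ and totals only $o(A_0)$, so it cannot make up the difference. Taking $\delta t=\delta t(B,K)\le1$ of the form (transport time)$\,+\,$(decay time)$\,\asymp(B+K)/A_0$ then yields $\Omega\bigl(F_1(t_1+\delta t),t_1+\delta t\bigr)\le A_0/2$. The step I expect to be the main obstacle is the control of the active band ahead of the front, where $H(\cdot,t)$ need not be monotone: bounding the effective length $L(B,K)$ of the band (needed for the linear a priori estimate) and verifying that the half-mass columns advect cleanly through and off the weight maximum without the band refilling fast enough are the delicate points, and the particular value $A_0=10^4(B+K)^2e^{2(B+K)}$ is chosen precisely so that the transport time $\asymp(B+K)/A_0$, the requirement $\delta t\le1$, and the loss $e^{B+K}$ built into the a priori bound are mutually compatible.
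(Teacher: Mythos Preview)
Your approach via the evolution equation for $\Omega(-\infty,t)$ is different from the paper's, and the a priori bound step does not go through as sketched. You aim to prove a \emph{linear} inequality $\tfrac{d}{dt}\Omega(-\infty,t)\le C(B,K)\,\Omega(-\infty,t)$ with $C(B,K)\ll A_0$, by arguing that the ``active band'' $\{z_1>F_1(t):H(z_1,t)\in[-B,K]\}$ carries weighted mass bounded independently of $A_0$. But at time $t_1$ essentially all of the weighted mass $4\Omega(F_1(t_1),t_1)=4A_0$ sits in precisely this band (columns with $H>K$ contribute $O(e^{K-H})$, columns with $H<-B$ contribute $O(e^{K-B})$), so $\int_{\text{active}}m_{t}(z_1)\,dz_1\asymp A_0$, not $C(B,K)$. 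Your odd-symmetry refinement $\int\omega\,\tfrac{\sinh z_2}{\cosh^2 z_2}\,dz_2\approx H\cdot m_t$ only helps in the narrow range $|H|\lesssim K$ around the exact maximum of $g(H):=m_t$; on the bulk of the active band $H\in[-B,-K]$ one has $g'(H)\asymp g(H)=m_t$, and you are back to the Riccati estimate. The obstacle you flag at the end is therefore not a technical nuisance but a genuine hole in the argument as it stands.

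The paper's proof bypasses the evolution equation entirely. The key observation is columnwise and completely elementary: once $H(z_1,\cdot)$ has crossed $-B$ (i.e.\ for every $z_1\ge F_1(t_1)$), the function $s\mapsto\int_{\R}\tfrac{\omega_0(z_1,z_2+z_1-H(z_1,s))}{\cosh z_2}\,dz_2$ can increase by a factor at most $e^{B+K}$ thereafter, simply because the weight $1/\cosh z_2$ varies by at most that factor over the support as $H$ ranges in $[-B,\infty)$. Summing in $z_1$ gives $\Omega(F_1(t_1),t)\le e^{B+K}\Omega(F_1(t_1),t_1)=A_0e^{B+K}$ for all $t\ge t_1$, immediately. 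Combined with the tail bound of Corollary~\ref{Ominfest} (which you also invoke), this yields the a priori estimate without any differential inequality and without needing to control the length or structure of the active band.

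Your ``halving'' step is closer in spirit to the paper, which makes it precise by introducing a threshold $R(t_1)\ge F_1(t_1)$ defined through the time-averaged value $\tfrac1{\delta t}\int_{t_1}^{t_1+\delta t}\Omega(R(t_1),s)\,ds=\tfrac1{10}A_0e^{-(B+K)}$ and then splitting into $z_1>R(t_1)$ (already small, by the columnwise factor $e^{B+K}$), $z_1\in[F_1(t_1),R(t_1)]$ (pushed far past the weight maximum since $\int\Omega\,ds\ge 10(B+K)$ there), and $z_1<F_1(t_1)$ (front moves only $O\bigl((B+K)e^{2(B+K)}\bigr)$). Your $z_1^{**}$ plays a role analogous to $R(t_1)$, but note the paper defines the cut via a \emph{time average} of $\Omega$, not its instantaneous value, which is what makes the transport estimate on $[F_1(t_1),R(t_1)]$ clean.
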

From Proposition~\ref{key2deprop} it follows, of course, that $\Omega(-\infty, t)$ is globally bounded, thus proving the first part of Theorem~\ref{hypexp}.
\begin{proof}
Set $\delta t = 100(B+K)e^{B+K}A_0^{-1}.$ Define $R(t_1)$ by the condition
\[ \frac{1}{\delta t} \int_{t_1}^{t_1+\delta t} \Omega(R(t_1),s)\,ds = \frac{1}{10}A_0 e^{-(B+K)}. \]
Note that if $R(t_1)$ exists, then it is unique due to monotonicity of the left hand side in the first argument of $\Omega$ (the only possible exception is if
$\omega_0$ vanishes for a range of $z_1$ and $R(t_1)$ fits exactly there; this exception is trivial as this range of $z_1$ can be simply collpased into a single point
without affecting anything). Let us consider two cases.

1. Suppose that $R(t_1)$ exists and $R(t_1) \geq F(t_1).$ In this case, we claim that
\begin{equation}\label{Rdeltab} \Omega(R(t_1), t_1 + \delta t) \leq \frac{A_0}{10}. \end{equation}
Indeed, by mean value theorem, we can find $t_2 \in [t_1, t_1+\delta t]$
such that \begin{equation}\label{Omt2b} \Omega(R(t_1),t_2) = \frac{1}{10}A_0 e^{-(B+K)}. \end{equation}
Note that for every $z_1 \geq R(t_1) \geq F(t_1),$ by Corollary~\ref{minB} we have $H(z_1,t) \geq -B$ for $t \geq t_1.$ The contribution of such $z_1$ to the integral providing the value of $\Omega(R(t_1),t_2)$
is equal to \[ \int_{\R} \frac{\omega_0(z_1, z_2+z_1 - H(z_1,t_2))}{\cosh z_2}\,dz_2. \]
Due to \eqref{zsupp} and the inequality $H(z_1,t_2) \geq -B,$ this integral can increase by a factor of at most $e^{B+K}$ over the remaining times:
\[ {\rm sup}_{s \geq t_2}  \int_{\R} \frac{\omega_0(z_1, z_2+z_1 - H(z_1,s))}{\cosh z_2}\,dz_2 \leq e^{B+K}  \int_{\R} \frac{\omega_0(z_1, z_2+z_1 - H(z_1,t_2))}{\cosh z_2}\,dz_2. \]
This and \eqref{Omt2b} imply \eqref{Rdeltab}.

Observe that an identical argument shows that
\begin{equation}\label{OmFb}
\Omega(F_1(t_1),t) \leq A_0 e^{B+K}
\end{equation}
for all $t \geq t_1,$ something that we will need later.

Let us now show that by time $t_1+\delta t,$ the points $z_1$ satisfying $F(t_1) \leq z_1 \leq R(t_1)$ do not contribute much to $\Omega.$
By definition of $R(t_1),$ we have
\[ \int_{t_1}^{t_1+\delta t} \Omega(R(t_1),s)\,ds = 10(B+K). \]
On the other hand, $H(z_1,t_1) \geq -B.$ Then
\[ H(z_1, t_1+\delta t) \geq -B + 20(B+K). \]
Therefore,
\[ \int_{\R} \frac{\omega_0(z_1, z_2 +z_1 - H(z_1,t_1+\delta t))}{\cosh z_2}\,dz_2 \leq e^{-10(B+K)} \int_{\R}
\frac{\omega_0(z_1, z_2 +z_1 - H(z_1,t_1))}{\cosh z_2}\,dz_2. \]
Hence,
\begin{equation}\label{intermed}
\Omega(F_1(t_1),t_1+\delta t) - \Omega(R(t_1),t_1+\delta t) \leq e^{-10(B+K)} \Omega(F_1(t_1),t_1) = e^{-10(B+K)} A_0.
\end{equation}

It remains to consider the contribution of $z_1 \leq F(t_1)$. Notice that due to \eqref{OmFb}, we have
\begin{equation}\label{omfex} \int_{t_1}^{t_1+\delta t} \Omega(F_1(t_1),s)\,ds  \leq \delta t A_0 e^{B+K} \leq 100 (B+K) e^{2(B+K)}. \end{equation}
Set $Y = F_1(t_1)- 1000(B+K) e^{2(B+K)}.$ We claim that $Y \leq F_1(t_1+\delta t),$ that is, for every $z_1 \leq Y$
we have
$H(z_1,t_1+\delta t) < -B.$
To show this, note first that by Proposition~\ref{frontstr}, $H(z_1,t_1) \leq -B -(1-\epsilon)(F_1(t)-z_1).$
Next, using \eqref{omfex} and $\|\omega(\cdot, t)\|_{L^\infty} =1$, we estimate
\begin{eqnarray*}
\int_{t_1}^{t_1+\delta t} \Omega(z_1,s)\,ds \leq \int_{t_1}^{t_1+\delta t} \Omega(F_1(t_1),s)\,ds + \int_{t_1}^{t_1+\delta t} \int_{z_1}^{F_1(t_1)} d \tilde{z}_1
\int_{\R} \frac{\omega(z,s)}{\cosh z_2}\,dz_2 ds \leq \\ 100 (B+K) e^{2(B+K)} + 2 (F_1(t)-z_1) \delta t \leq 100 (B+K) e^{2(B+K)} + \frac{1}{50} e^{-B-K} (F(t_1)-z_1).
\end{eqnarray*}
To ensure that $H(z_1,t_1+\delta t) < -B$ holds for $z_1 \leq Y,$ it suffices to verify that
\[ (1-\epsilon)(F_1(t_1)-Y) \geq 200 (B+K) e^{2(B+K)} + \frac{1}{25} e^{-B-K} (F(t_1)-Y). \]
This clearly holds true by our choice of $B$ in Proposition~\ref{frontstr}.
Now since $F_1(t_1+\delta t) \geq Y,$ the contribution of all $z_1 \leq F_1(t_1)$ to $\Omega(-\infty,t)$ for times $t_1 \leq t \leq t_1 +\delta t$ cannot exceed
\begin{equation}\label{restofz}
\Omega(-\infty,t)-\Omega(F_1(t_1),t) \leq 2000(B+K)e^{2(B+K)} + \frac{1}{1-\epsilon}e^{K-B}
 \end{equation}
by direct estimate and Corollary~\ref{Ominfest}.

Combining estimates \eqref{Rdeltab}, \eqref{intermed} and \eqref{restofz} together, we find that
\[ \Omega(F_1(t_1+\delta t), t_1+\delta t) \leq \Omega(-\infty, t_1+\delta t) \leq \frac{A_0}{10} + A_0 e^{-10(B+K)}+2000(B+K)e^{2(B+K)}+ 2e^{K-B} < \frac{A_0}{2} \]
by definition of $A_0$ and our choice of $B.$

In addition, by \eqref{OmFb} and \eqref{restofz}, for every $t_1 \leq t \leq t_1+\delta t$ we have
\[ \Omega(-\infty, t) \leq A_0 e^{B+K} + 2000(B+K)e^{2(B+K)}+ 2e^{K-B} \leq 2A_0 e^{B+K}. \]

2. Suppose now that $R(t_1) < F(t_1)$ or does not exist. This case is easier. We now have
\[ \frac{1}{\delta t} \int_{t_1}^{t_1+\delta t} \Omega(F_1(t_1), s) \,ds \leq \frac{A_0}{10} e^{-(B+K)}, \]
and $\Omega(F_1(t_1), t_1+\delta t) \leq A_0/10$ by the same argument as the bound for $\Omega(R(t_1), t_1+\delta t)$
in the first case. We also have \[ \Omega(F_1(t_1), t) \leq A_0 e^{B+K} \] for all $t_1 \leq t \leq t_1+\delta t.$
Thus the range $z_1 \geq F_1(t_1)$ is controlled. the estimate for $z_1 \leq F_1(t_1)$ proceeds similarly to the first case,
but now we have a better bound
\[ \int_{t_1}^{t_1+\delta t} \Omega(F_1(t_1), s) \,ds \leq \delta t \frac{A_0}{10} e^{-(B+K)} \leq 10 (B+K). \]
Similarly to the first case, we can show that $F_1(t_1+\delta t) \geq F_1(t_1) - 20 (B+K).$
The rest of the argument is parallel to the first case and in fact yields better bounds.
\end{proof}
Finally, we note that \eqref{deltaest} and Lemma~\ref{Omlb} can be used in a straightforward way to show existence of solutions to the hyperbolic 2D Euler equation with exponential
growth of derivatives. In fact, such behavior is generic if $\omega_0$ is non-negative in $D$ and does not identically vanish on the boundary. This observation completes the proof
of Theorem~\ref{hypexp}.

\section{Finite time blow up}\label{blowup}

We now come back to the full system \eqref{omsol}, \eqref{phidef} and prove Theorem~\ref{bu}.
For the sake of simplicity, we will assume that
$\omega_0 \equiv 0,$ while $\rho_0 \in K_n,$ $n \geq 1,$ $\rho_0 \geq 0,$ and $\rho_0$ does not identically vanish on $\partial D.$
We will assume that the solution stays globally regular, and obtain a contradiction.
The hyperbolic Boussinesq system in the integral form can be reduced to the single equation
\begin{equation}\label{phieff}
\Phi(z_1,t) =  \frac12 \int_0^t \int_{z_1}^\infty  \int_{\R} \frac{f_1\left(\tilde{z}_1, z_2 - \Phi(\tilde{z}_1,s)\right) \int_0^s e^{\frac12 \Phi(\tilde{z}_1,r)}\,dr}{\cosh z_2}\,dz_2 d\tilde{z}_1 ds,
\end{equation}
with $f_1(z_1,z_2) = e^{\frac{z_2-z_1}{2}} \rho_0(z_1,z_2).$

Define \[ F_2(t) = {\rm max} \{ z_1 \left| H(z_1,t)=0 \right. \}. \]
Clearly, an analog of Lemma~\ref{suppinf} holds for the full system by an argument completely parallel to the 2D hyperbolic Euler case.
It follows that $F_2(t)$ is well-defined for all $t$ larger than $t_0$ which only depends on $\rho_0.$ Moreover, $F_2(t)$ is monotone decreasing, perhaps with jumps,
and tends to $-\infty$ as time advances. Let us define $Z_2$ in the same fashion as $Z_1$ in Lemma~\ref{constdrive}, but for $f_1$ instead of $\omega_0:$
$Z_2$ is the maximal value such that for every $z_1 \leq Z_2,$ we have
\begin{equation}\label{f1bound1} \int_{\R} f_1(z_1,z_2)\,dz_2 \geq c(\rho_0) >0.  \end{equation}
Let us choose $t_2>0$ so that $F_2(t_2) = {\rm min}(-10,Z_2)$ or $t_2=0$ if $F_2(0) \leq {\rm min}(-10,Z_2).$

\begin{lemma}\label{driveb}
For every $t \geq t_2,$ for every $z_1 \in [F_2(t), F_2(t)+1],$ for every $s$ such that $t_2 \leq s \leq t,$ we have
\[ \Phi(z_1,s) \geq \frac12 |F_2(s)|. \]
\end{lemma}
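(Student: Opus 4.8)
The plan is to prove Lemma~\ref{driveb} by a continuity/bootstrap argument in time, combined with a lower bound on $\Omega$ coming from the positivity of $\rho$ and the explicit formula \eqref{phieff}. First I would observe that, exactly as in the 2D hyperbolic Euler case, for $z_1 \leq F_2(t)+1$ (hence to the left of, or near, the front) the support of the solution lies near the maximum of the weight $1/\cosh z_2$; more precisely, using \eqref{zsupp} and the integral formula \eqref{phieff}, for such $z_1$ the inner density integral $\int_{\R} f_1(\tilde z_1, z_2 - \Phi(\tilde z_1, s))/\cosh z_2 \, dz_2$ is bounded below by $c(\rho_0) e^{-K}$ once $z_1 \leq Z_2$ and $|H(z_1, s)| \leq K$. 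The point of choosing $t_2$ so that $F_2(t_2) \leq \min(-10, Z_2)$ is precisely to guarantee we are in this regime for all $t \geq t_2$.

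**Key steps.** (1) Fix $z_1 \in [F_2(t), F_2(t)+1]$. From \eqref{phieff}, write $\partial_s \Phi(z_1, s) = 2\Omega(z_1, s)$ where $\Omega$ now also carries the factor $\int_0^s e^{\frac12 \Phi(\tilde z_1, r)}\, dr$ inside the $\tilde z_1$-integral. (2) Bound $\Omega(z_1, s)$ from below: integrating $\tilde z_1$ over an interval of length $O(1)$ sitting just to the left of $F_2(s)$ (where, by definition of $F_2$ and monotonicity-type control on $H$ analogous to Proposition~\ref{frontstr}, one has $|H(\tilde z_1, s)|$ controlled), the density integral contributes at least a fixed constant, while the time-integral factor $\int_0^s e^{\frac12 \Phi(\tilde z_1, r)}\, dr$ is bounded below by $s - t_2$ or better by an exponential of the already-accumulated phase. (3) This yields a differential inequality of the form $\partial_s \Phi(z_1, s) \geq 2\gamma(s)$ with $\gamma$ growing, which upon integration from $t_2$ to $s$ gives $\Phi(z_1, s) \geq$ (something comparable to the distance the front has travelled), i.e. comparable to $|F_2(s)| - |F_2(t_2)|$. (4) Since $|F_2(s)| \to \infty$ and we may absorb the initial offset $|F_2(t_2)|$ into the constants (or start the clock slightly later), we conclude $\Phi(z_1, s) \geq \frac12 |F_2(s)|$.

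**Main obstacle.** The delicate point is step (3)–(4): turning the lower bound on $\Omega$ into the specific clean bound $\Phi(z_1, s) \geq \frac12 |F_2(s)|$ with the right constant. This requires knowing that the front $F_2$ does not recede faster than the phase $\Phi$ grows — morally, that the speed $|\dot F_2(s)|$ of the front is controlled by $\Omega(F_2(s), s)$, which in turn is (up to the $e^{\Phi/2}$ factor) what drives $\Phi$ upward. So one needs a self-consistent estimate: the same quantity that pushes the front left also pumps up $\Phi$, and one must check the constants line up so that $\Phi$ wins by a factor of $2$. I expect this to be handled by first establishing, as in Corollary~\ref{minB} and Proposition~\ref{frontstr}, that $\partial_{z_1} H(z_1, s)$ stays close to $1$ to the left of the front (so $|F_2(s)|$ is comparable to the accumulated phase at nearby points), and then feeding this back into the integral bound for $\Omega(z_1, s)$. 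The choice $F_2(t_2) \leq -10$ provides the numerical slack needed to absorb lower-order error terms, just as the choice of $B$ did in the previous section.
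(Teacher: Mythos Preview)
Your proposal is vastly overcomplicated and misses the point of the lemma. The statement is not a dynamical estimate at all; it is essentially a tautology following from the \emph{definition} of $F_2$ together with two monotonicity facts.

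Here is what the paper does, in two lines. Recall $H(z_1,s) = z_1 + \Phi(z_1,s)$ and $F_2(s) = \max\{z_1 : H(z_1,s)=0\}$. Since $H$ is continuous, equals $z_1 > 0$ for large $z_1$, and $F_2(s)$ is the \emph{largest} zero, we have $H(z_1,s) \geq 0$ for every $z_1 \geq F_2(s)$. Thus for $z_1 \in [F_2(s), F_2(s)+1]$,
\[
\Phi(z_1,s) = H(z_1,s) - z_1 \geq -z_1 \geq |F_2(s)| - 1 \geq \tfrac12 |F_2(s)|,
\]
the last step using $|F_2(s)| \geq 10$ (this is exactly why $t_2$ was chosen so that $F_2(t_2) \leq -10$). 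Finally, if $z_1 \in [F_2(t), F_2(t)+1]$ with $t \geq s$, then $z_1 \leq F_2(t)+1 \leq F_2(s)+1$ (monotonicity of $F_2$ in time), and since $\Phi(\cdot,s)$ is nonincreasing in $z_1$ (because $\omega \geq 0$), the bound only improves.

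So the ``main obstacle'' you identify --- comparing the speed of the front to the growth of the phase --- is not an obstacle here: the identity $\Phi(F_2(s),s) = -F_2(s) = |F_2(s)|$ is literally the definition of $F_2$, and the whole lemma is a one-step perturbation of that identity. No bootstrap, no lower bound on $\Omega$, no differential inequality, and no analog of Proposition~\ref{frontstr} is needed. The machinery you propose is what is used \emph{after} this lemma, in the proof of Theorem~\ref{bu}, to derive the blow-up differential inequality for $F_2$; it is not needed for the lemma itself.
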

\begin{proof}
Let $s \geq t_2,$ and $z_1 \in [F_2(s), F_2(s)+1].$ Then due to the definition of $F_2,$ we have $H(z_1,s) \geq 0.$
This implies that \[ \Phi(z_1,s) \geq -z_1 \geq |F_2(s)| - 1 \geq \frac{|F_2(s)|}{2}. \]
Now if $z_1 \in [F_2(t), F_2(t)+1],$ then $\Phi(z_1,s)$ is even larger since $F_2$ is monotone decreasing in time and $\Phi$ is monotone
decreasing in $z_1.$
\end{proof}

\begin{proof}[Proof of Theorem~\ref{bu}]
Consider the identity $H(F_2(t),t)=0,$ $t \geq t_2.$ Since $F_2$ is monotone, it is differentiable for a.e. $t.$ thus for a.e. $t$ we have
\begin{equation}\label{keyeqdiff} 0 = \frac{d}{dt}H(F_2(t),t) = \partial_{z_1} H(F_2(t),t) F_2'(t) + \partial_t H(F_2(t), t). \end{equation}
Observe that by Lemma~\ref{driveb}, we have
\begin{eqnarray*}
\partial_t H(F_2(t),t) = \frac12 \int_{F_2(t)}^\infty \int_{\R} \frac{f_1\left(z_1, z_2 - \Phi(z_1,t)\right) \int_0^t e^{\frac12 \Phi(z_1,s)}\,ds}{\cosh z_2}\,dz_2 dz_1 \geq \\
 \frac12 \int_{t_2}^t e^{\frac14 |F_2(s)|}\,ds \int_{F_2(t)}^{F_2(t)+1} \int_{\R} \frac{f_1\left(z_1, z_2 +z_1 - H(z_1,t)\right)}{\cosh z_2} \,dz_2 dz_1.
\end{eqnarray*}
Due to the definition of $F_2$ and the bound $\partial_{z_1}H(z_1,t) \leq 1,$ we have that for $z_1 \in [F_2(t),F_2(t)+1],$ the inequality $0 \leq H(z_1,t) \leq 1$ holds.
Then, using \eqref{zsupp} and \eqref{f1bound1}, we can estimate
\[  \int_{F_2(t)}^{F_2(t)+1} \int_{\R} \frac{f_1\left(z_1, z_2 +z_1 - H(z_1,t)\right)}{\cosh z_2} \,dz_2 dz_1 \geq C(\rho_0) \int_{F_2(t)}^{F_2(t)+1} \frac{1}{\cosh H(z_1,t)} \,dz_1
\geq C_1(\rho_0)>0. \]
Therefore, we arrive at the bound
\begin{equation}\label{parttH}
\partial_t H(F_2(t),t) \geq C  \int_{t_2}^t e^{\frac14 |F_2(s)|}\,ds,
\end{equation}
where the constant $C$ depends only on $\rho_0.$ On the other hand, it follows from our usual estimate and the definition of $F_2$ that $1 \geq \partial_{z_1} H(F_2(t),t) \geq 0.$
Combining this bound with \eqref{keyeqdiff} and \eqref{parttH}, we obtain
\[ F_2'(t) \leq -C \int_{t_2}^t e^{\frac14 |F_2(s)|}\,ds, \]
for a.e. $t.$ Applying this differential inequality, it is straightforward to show that $F_2(t)$ tends to $-\infty$ in finite time. Therefore, $\Phi(-\infty, t)$ becomes infinite
in finite time. Explicit formulas for the solution show that this means finite time blow up; by Proposition~\ref{truebkm}, this also implies that
\[ \lim_{t \rightarrow T} \int_0^t \|\omega(\cdot, t)\|_{L^\infty}\,dt = \infty \]
for the blow up time $T< \infty.$
\end{proof}

\vspace{0.5cm}

\textbf{Acknowledgment.}
The authors acknowledge partial support of the NSF-DMS grant 1412023.


\end{document}